\newcounter{thecounter}
\numberwithin{thecounter}{section}
\newtheorem{lemma}[thecounter]{Lemma}
\newtheorem{prop}[thecounter]{Proposition}
\newtheorem{thm}[thecounter]{Theorem}
\newtheorem{cor}[thecounter]{Corollary}
\theoremstyle{definition}
\newtheorem{rem}[thecounter]{Remark}
\numberwithin{equation}{section}
\newcommand{\Hom}{\operatorname{Hom}}
\newcommand{\im}{\operatorname{im}}
\newcommand{\Tor}{\operatorname{Tor}}
\newcommand{\fd}{\operatorname{fd}}
\newcommand{\cC}{{\mathcal{C}}}
\newcommand{\pcom}{\hat{{}_p}}
\newcommand{\DI}{\operatorname{DI}}
\newcommand{\D}{{\mathbf{D}}}
\newcommand{\dT}{\breve T}
\newcommand{\GL}{\operatorname{GL}}
\newcommand{\SU}{\operatorname{SU}}
\newcommand{\Sp}{\operatorname{Sp}}
\newcommand{\Spin}{\operatorname{Spin}}
\newcommand{\Q}{{\mathbb{Q}}}
\newcommand{\F}{{\mathbb{F}}}
\newcommand{\Z}{{\mathbb{Z}}}
\newcommand{\C}{{\mathbb{C}}}
\newcommand{\CP}{{\mathbb{C}P}}
\newcommand{\cP}{{\mathcal{P}}}
\newcommand{\co}{\colon\thinspace}
\begin{document}

\title{The Steenrod problem of realizing polynomial cohomology rings}

\author[K. Andersen and J. Grodal]{Kasper K. S. Andersen and Jesper Grodal}

\thanks{The second named author was partially supported by NSF grant
  DMS-0354633, the Alfred P. Sloan Foundation, and the Danish Natural
  Science Research Council}

\subjclass[2000]{Primary: 55N10; Secondary: 55R35, 55R40}

\address{Department of Mathematical Sciences, University of Aarhus,
  Ny Munkegade, Bygning 1530, DK-8000 Aarhus C, Denmark}
\email{kksa@imf.au.dk}

\address{
Department of Mathematical Sciences, University of Copenhagen,
Universitetsparken 5, DK-2100
Copenhagen, Denmark}
\email{jg@math.ku.dk}

\begin{abstract}
In this paper we completely classify which graded polynomial
$R$--algebras in finitely many even degree variables can occur as the
singular cohomology of a space with coefficients in $R$, a $1960$
question of N.~E.~Steenrod, for a commutative ring $R$ satisfying mild
conditions. In the fundamental case $R = \Z$, our result states
that the only polynomial cohomology rings over $\Z$ which can
occur, are tensor products of copies of $H^*(\CP^\infty;\Z) \cong \Z[x_2]$,
$H^*(B\SU(n);\Z) \cong \Z[x_4, x_6, \ldots, x_{2n}]$,
and $H^*(B\Sp(n);\Z) \cong \Z[x_4, x_8, \ldots, x_{4n}]$, confirming
an old conjecture.
Our classification extends Notbohm's solution for $R = \F_p$, $p$
odd. Odd degree generators, excluded above, only occur if $R$ is an
$\F_2$--algebra and in that case the recent classification of
$2$--compact groups by the authors can be used instead of the present
paper. Our proofs are short and rely on the general theory
of $p$--compact groups, but not on classification
results for these.
\end{abstract}

\maketitle


\section{Introduction}

In $1960$, N.~E.~Steenrod \cite{steenrod61} asked which graded
polynomial rings occur as the cohomology ring
of a space. In this paper we answer this question.

\begin{thm} \label{mainZ}
If $H^*(X;\Z)$ is a finitely generated polynomial algebra over $\Z$,
for some space $X$, then $H^*(X;\Z)$ is isomorphic, as a graded
algebra, to a tensor product of copies of $H^*(\CP^\infty;\Z) \cong \Z[x_2]$,
$H^*(B\SU(n);\Z) \cong
\Z[x_4, x_6, \ldots, x_{2n}]$, and $H^*(B\Sp(n);\Z) \cong \Z[x_4, x_8,
\ldots, x_{4n}]$.
\end{thm}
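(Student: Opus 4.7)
The plan is to reduce the integral Steenrod question to a prime-by-prime one, apply the mod-$p$ classification of polynomial cohomology in even degrees locally, and then piece the reflection-group data back together globally.

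First I would note that, since $H^*(X;\Z)$ is a polynomial ring, it is torsion-free and of finite type, so the universal coefficient theorem yields $H^*(X;\F_p) \cong H^*(X;\Z)\otimes\F_p$ as a polynomial $\F_p$-algebra with the same generator degrees for every prime $p$; in particular, at $p=2$ no odd-degree generators appear. This is the reduction that allows the mod-$p$ theory to speak to the integral problem.

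Next I would apply Notbohm's theorem at each odd prime to conclude that $X^\wedge_p$ is the classifying space of a connected $p$-compact group $G_p$ whose Weyl group $W_p$ is a $p$-adic pseudo-reflection group acting on a $\Z_p$-lattice $L_p$, with invariant ring the given polynomial algebra. At the prime $2$, since only even-degree generators occur, I would invoke the analogous statement, which the authors indicate is accessible from the general theory of $p$-compact groups without appealing to the full classification of $2$-compact groups (the even-degree hypothesis being decisive here, as it closes off the exotic $\DI(4)$ possibility).

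I would then observe that one multiset of degrees is realized by a pseudo-reflection representation over $\Z_p$ for every prime $p$ simultaneously. Consulting Clark--Ewing, the sporadic exotic $p$-adic reflection groups exist only over restricted residue characteristics, so the degree data must come from a genuine $\Z$-reflection group $(W,L)$, hence from the Weyl group of a connected compact Lie group $G$. Requiring the integral cohomology of $BG$ to be polynomial with only even-degree generators then rules out $\Spin(n)$ (for $n \geq 7$), $G_2$, $F_4$, $E_6$, $E_7$, $E_8$ (all of which force $p$-torsion or odd-degree generators at some prime), leaving products of copies of $S^1$, $SU(n)$, and $Sp(n)$, whose integral cohomologies are precisely the tensor factors listed. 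A final comparison of Chern/Pontrjagin-type generators, and not merely of degree multisets, pins down the isomorphism.

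I expect the main obstacle to be the globalization step: showing that pseudo-reflection data at each prime really descend to an integral Weyl group, and producing the algebra isomorphism (not just a degree match) between $H^*(X;\Z)$ and the target tensor product. The accompanying challenge is to carry out the $p=2$ part of the argument using only general $p$-compact group theory rather than the $2$-compact classification, which is what forces reliance on the even-degree hypothesis provided by the integral setup.
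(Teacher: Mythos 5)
Your reduction to mod $p$ cohomology is sound for $R=\Z$: since $H^*(X;\Z)$ is torsion-free, the Bockstein sequence for $0\to\Z\xrightarrow{p}\Z\to\F_p\to 0$ gives $H^*(X;\F_p)\cong H^*(X;\Z)\otimes\F_p$, a polynomial algebra of the same type with all generators in even degrees (the paper notes this shortcut works for $R\subseteq\Q$; the finitistic-dimension machinery of its Section 2 is only needed for general $R$). The genuine gap is at the prime $2$, which is exactly where the integral theorem gets all of its strength: at $p=2$ the only admissible types are already those of $S^1$, $\SU(n)$ and $\Sp(n)$, so no odd-prime or globalization input is needed at all. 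But you dispose of the $p=2$ case by ``invoking the analogous statement, which the authors indicate is accessible from general $p$-compact group theory'' --- that statement \emph{is} the main new content of the paper (Section 4): one must split the $\Z_2$-root datum of the $2$-compact group $X\twocom$ into a Lie part and exotic summands, use Lannes' $T$-functor to show that centralizers of elementary abelian subgroups are connected, feed this through Steinberg and Borel to conclude the Lie part is $2$-torsion free (hence a product of $S^1$, $\SU(n)$, $\Sp(n)$, since $\Spin$ and the exceptional groups all have $2$-torsion), and separately rule out the unique exotic candidate $\DI(4)$ (e.g.\ because $\F_2[x_8,x_{12},x_{28}]$ is not an unstable algebra, or via disconnectedness of a centralizer). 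None of this appears in your outline, so the proof does not go through as written.

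Two further structural points. Your globalization step --- arguing that compatible pseudo-reflection data at all primes must descend to an integral reflection group --- is both unjustified as stated (a priori the decomposition of the degree multiset into irreducible pieces could differ from prime to prime, and only the degrees, not the representations, need to match) and unnecessary, since the $p=2$ restriction alone determines the answer. Likewise the final ``comparison of Chern/Pontrjagin-type generators'' is not needed: the theorem asserts only an abstract graded algebra isomorphism, and a graded polynomial $\Z$-algebra is determined up to isomorphism by its type. Your appeal to Notbohm at odd primes is legitimate but idle here, and the paper deliberately avoids it.
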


This has been a standard conjecture since the $1970$s. In fact we
solve the question over any ground ring
$R$, satisfying mild assumptions. As usual, $\CP^\infty$ denotes
infinite complex projective space, and
$B\SU(n)$ and $B\Sp(n)$ are the classifying spaces of the special
unitary group and the symplectic group respectively.
 We first describe the background of the
problem and previously known results.

Steenrod proved in his $1960$ paper \cite{steenrod61} that if $H^*(X;\Z)
\cong \Z[x]$ then $\left| x\right| =2$ or $4$, as
a consequence of his newly introduced Steenrod operations, 
settling the one-variable case over $\Z$. In
contrast, when taking coefficients in a field $R$ of characteristic zero every
graded polynomial ring on even degree generators can occur, since
$H^*(K(\Z,2n);R) \cong R[x_{2n}]$, as proved by Serre in his thesis
\cite[Ch.~VI, \S3, Prop.~4]{serre51}. Note that by anti-commutativity, all
generators have to be in even degrees, whenever $2\neq 0$ in $R$.

Restrictions on realizable polynomial rings over
$\F_p$ were studied in the $1960$s and $1970$s, mainly
by ingenious use of Steenrod operations
(see e.g., \cite{thomas63,thomas65,ST69,steenrod71}). Key progress was
made with the discovery of a connection with finite $p$--adic
reflection groups by Sullivan \cite{sullivan05},
Clark--Ewing \cite{CE74}, and others in the early $1970$s. Further significant progress was
made with the connection to the category of unstable modules
over the Steenrod algebra starting with Wilkerson \cite{wilkerson77}
and Adams--Wilkerson \cite{AW80} in the late $1970$s and early $1980$s.
Aguad{\'e} in his $1981$ paper \cite{aguade81} used Adams--Wilkerson's work
\cite{AW80} to obtain a partial version of Theorem~\ref{mainZ},
under the strong additional hypothesis that
all generators are in different degrees. (Theorem~\ref{mainZ} is referred to in \cite{aguade81} as ``the standing conjecture''; see also
the survey paper \cite{aguade82} for a historical account and
references as of $1981$.)
Following additional partial results, Notbohm \cite{notbohm99}
in $1999$ used
the more powerful techniques of $p$--compact groups, as developed by
Dwyer--Wilkerson \cite{DW94,DMW92} and others since the early $1990$s,
to obtain a full answer in the case where the ground ring is $\F_p$ for
$p$ odd. Note however that $2$--primary information is needed for
Theorem~\ref{mainZ} as e.g., $H^*(B\Spin(2n);\Z[\frac{1}{2}]) \cong
\Z[\frac12][x_4,x_8,\ldots,x_{4(n-1)},x_{2n}]$.

In the present paper we obtain
a solution for any commutative Noetherian ring $R$ of
finite Krull dimension, under the
assumption that all generators are in even degrees. The even
degrees assumption is automatic unless $R$ is an
$\F_2$--algebra; in that case a solution however
follows as a consequence of our recent
classification of $2$--compact groups; see
\cite[Thm.~1.4]{AG08classification}. We use standard results about
$p$--compact groups in the present paper, but remark that classification
results for these, as well as Notbohm's results in the
case $R=\F_p$, $p$ odd, are not used. We also
note that Theorem~\ref{mainZ} can alternatively be deduced from
our aforementioned result
\cite[Thm.~1.4]{AG08classification}, but the argument we present
here is significantly more direct, as explained below.
By the {\em type} of a graded polynomial ring we mean the multiset (i.e.,
unordered tuple) of degrees of its generators. The general result 
(implying Theorem~\ref{mainZ}) which we prove here is as follows.

\begin{thm} \label{mainthm}
Let $R$ be a commutative Noetherian ring of finite Krull dimension, $\cP$
the set of prime numbers $p$ which are not units in $R$, and $A$ a
graded polynomial $R$--algebra in finitely many variables, all in positive
even degrees. Then, there exists a space $Y$ such that
$A \cong H^*(Y;R)$ as graded $R$--algebras if and only if for each prime $p
\in \cP$ the type of $A$ is a union of multisets of degrees in
Tables~\ref{lietable}--\ref{extable2} which occur at the prime $p$.
\begin{table}[htb]
\addtolength\arraycolsep{13pt}
$$
\begin{array}{c|c|c}
\text{Lie group} & \text{Degrees} & \text{Occur for}\\
\hline
S^1 & 2 & \text{all $p$}\\
\SU(n) & 4, 6, \ldots, 2n & \text{all $p$}\\
\Sp(n) & 4, 8, \ldots, 4n & \text{all $p$}\\
\Spin(2n) & 4, 8, \ldots, 4(n-1),\, 2n & p\geq 3\\
G_2 & 4, 12 & p\geq 3\\
F_4 & 4, 12, 16, 24 & p\geq 5\\
E_6 & 4, 10, 12, 16, 18, 24 & p \geq 5\\
E_7 & 4, 12, 16, 20, 24, 28, 36 & p \geq 5\\
E_8 & 4, 16, 24, 28, 36, 40, 48, 60 & p \geq 7\\
\end{array}
$$
\caption{Lie group cases}
\label{lietable}
\end{table}
\vspace{-28pt}
\begin{table}[htb]
$$
\begin{array}{c|c|c|c}
W & \text{Degrees} & \text{Conditions} &\text{Occur for}\\
\hline
\begin{gathered}[t]
G(m,r,n) \\ 
D_{2m} \\
C_m 
\end{gathered}
&
\begin{gathered}[t]
2m, 4m, \ldots, 2(n-1) m,\, 2mn/r\\
4, 2m \\
2m
\end{gathered}
&
\begin{gathered}[t]
n\geq 2,\, m\geq 3,\, r\!\mid\!m\\
m\geq 5,\, m\neq 6\\
m\geq 3
\end{gathered}
&
\begin{aligned}[t]
p & \equiv 1 \pmod{m}\\
p & \equiv \pm 1 \pmod{m}\\
p & \equiv 1 \pmod{m}
\end{aligned}
\\
\end{array}
$$
\caption{Exotic cases, first part (family 2a, 2b, and 3)}
\label{extable1}
\end{table}
\nopagebreak
\vspace{-28pt}
\nopagebreak
\begin{table}[h]
\begin{gather*}
\begin{array}{cc}
\begin{array}{c|c|c}
W & \mbox{Degrees} & \mbox{Occur for} \\
\hline
\begin{gathered}[t]
\mathstrut G_8                                         \\
\mathstrut G_9                                         \\
\mathstrut G_{12}                                      \\
\mathstrut G_{14}                                      \\
\mathstrut G_{16}                                      \\
\mathstrut G_{17}                                      \\
\mathstrut G_{20}                                      \\
\mathstrut G_{21}                                      \\
\mathstrut G_{22}                                      
\end{gathered}
            & 
\begin{gathered}[t]
\mathstrut 16, 24                                      \\                
\mathstrut 16, 48                                      \\             
\mathstrut 12, 16                                      \\           
\mathstrut 12, 48                                      \\         
\mathstrut 40, 60                                      \\       
\mathstrut 40, 120                                     \\     
\mathstrut 24, 60                                      \\   
\mathstrut 24, 120                                     \\ 
\mathstrut 24, 40                                      
\end{gathered}
&
\begin{alignedat}[t]{2}
\mathstrut p &\equiv 1 &&\pmod{4}                      \\
\mathstrut p &\equiv 1 &&\pmod{8}                      \\
\mathstrut p &\equiv 1, 3 &&\pmod{8}                   \\
\mathstrut p &\equiv 1, 19 &&\pmod{24}                    \\
\mathstrut p &\equiv 1 &&\pmod{5}                      \\
\mathstrut p &\equiv 1 &&\pmod{20}                     \\
\mathstrut p &\equiv 1, 4 &&\pmod{15}                     \\
\mathstrut p &\equiv 1, 49 &&\pmod{60}                    \\
\mathstrut p &\equiv 1, 9 &&\pmod{20}                  
\end{alignedat}
\end{array}
&
\begin{array}{c|c|c}
W & \mbox{Degrees} & \mbox{Occur for} \\
\hline
\begin{gathered}[t]
\mathstrut G_{23}                                      \\
\mathstrut G_{24}                                      \\
\mathstrut G_{29}                                      \\
\mathstrut G_{30}                                      \\
\mathstrut G_{31}                                      \\
\mathstrut G_{32}                                      \\
\mathstrut G_{33}                                      \\
\mathstrut G_{34}
\end{gathered}
            & 
\begin{gathered}[t]
\mathstrut 4, 12, 20                                   \\
\mathstrut 8, 12, 28                                   \\
\mathstrut 8, 16, 24, 40                               \\
\mathstrut 4, 24, 40, 60                               \\
\mathstrut 16, 24, 40, 48                              \\
\mathstrut 24, 36, 48, 60                              \\
\mathstrut 8, 12, 20, 24, 36                           \\
\mathstrut 12, 24, 36, 48, 60, 84
\end{gathered}
&
\begin{alignedat}[t]{2}
\mathstrut p &\equiv 1, 4 && \pmod{5}                      \\
\mathstrut p &\equiv 1, 2, 4 &&\pmod{7},\, p\neq 2     \\
\mathstrut p &\equiv 1 &&\pmod{4}                      \\
\mathstrut p &\equiv 1, 4 &&\pmod{5}                   \\
\mathstrut p &\equiv 1 &&\pmod{4}                      \\
\mathstrut p &\equiv 1 &&\pmod{3}                      \\
\mathstrut p &\equiv 1 &&\pmod{3}                      \\
\mathstrut p &\equiv 1 &&\pmod{3}                      \\
\end{alignedat}
\end{array}
\end{array}
\end{gather*}
\caption{Exotic cases, continued}
\label{extable2}
\end{table}
\end{thm}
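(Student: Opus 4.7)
The approach is to work prime-by-prime, reducing to the theory of connected $p$-compact groups and then matching Weyl group degrees against the Shephard--Todd classification.

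For necessity, assume $A \cong H^*(Y;R)$. For each $p \in \cP$, base change gives $H^*(Y;\F_p) \cong A \otimes_R \F_p$, again polynomial in the same even degrees. A finiteness argument via the Eilenberg--Moore spectral sequence shows that $\Omega Y^\wedge_p$ has finite mod $p$ cohomology, so $Y^\wedge_p \simeq BX$ for a connected $p$-compact group $X$ in the sense of Dwyer--Wilkerson. Their theory provides $X$ with a maximal torus $\dT$ and a Weyl group $W$ acting faithfully on $L = \pi_2(B\dT)$ as a finite $\Z_p$-reflection group, with $H^*(BX;\Q_p) \cong (S^*(L^*)\otimes \Q_p)^W$. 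A torsion-free polynomial lifting argument, exploiting the fact that the mod $p$ reduction sits in even degrees and is hence torsion-free, then promotes this to $H^*(BX;\Z_p) \cong \Z_p[x_1,\ldots,x_n]$ of the same type as $A$, and by Chevalley--Shephard--Todd the $|x_i|$ are twice the fundamental degrees of $W$.

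It remains to determine the possible irreducible factors of $W$ as a $\Z_p$-reflection group. By the Clark--Ewing analysis of the Shephard--Todd list, such factors exist over $\Z_p$ precisely when the associated roots of unity lie in $\Z_p^\times$, which gives exactly the congruence conditions in Tables~\ref{lietable}--\ref{extable2}. This establishes necessity. For sufficiency, at each $p \in \cP$ we decompose the type into sub-multisets from the tables occurring at $p$, form the corresponding product $Y_p$ of $p$-completed classifying spaces $BG^\wedge_p$ (Lie case) and exotic $BX(W)^\wedge_p$, and glue these pieces together along their common rationalization $\prod_i K(\Q,2d_i)$ via Sullivan's arithmetic fracture square to produce a space $Y$ with $H^*(Y;R) \cong A$.

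The main technical obstacle, and the point where extra work is needed beyond Notbohm's $p$ odd treatment, is the integral lift: showing that $H^*(BX;\Z_p)$ is itself polynomial and that $W$ is realized over $\Z_p$ rather than merely over $\Q_p$. This integrality is precisely what forces the arithmetic congruences on $p$ tabulated in the statement. Handling $p=2$ requires care because the standard Steenrod-algebra machinery behaves differently there; however, the assumption that all generators lie in even degrees keeps us within the mainstream $p$-compact group framework and avoids the exceptional odd-generator phenomena at $2$ that would otherwise require the full classification of $2$-compact groups from \cite{AG08classification}.
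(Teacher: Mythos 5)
There is a genuine gap in your necessity argument, at the step where you pass from ``the type of $A$ equals twice the fundamental degrees of the Weyl group $W$, a finite $\Z_p$--reflection group'' to ``the type appears in the tables.'' You assert that the congruence conditions in Tables~\ref{lietable}--\ref{extable2} record exactly when the reflection group exists over $\Z_p$ (roots of unity lying in $\Z_p^\times$). That is false, and it is precisely the point where the real work of the proof happens. For the Lie-type entries the Weyl groups are defined over $\Z$ and exist at \emph{every} prime; the conditions $p\geq 3$, $p\geq 5$, $p\geq 7$ instead record Borel's determination of the torsion primes of the corresponding compact Lie groups. Your argument as written would conclude, for example, that $\Z[x_4,x_{16},x_{24},x_{28},x_{36},x_{40},x_{48},x_{60}]$ (the degrees of $W(E_8)$, a perfectly good $\Q_2$--reflection group) is realizable over $\Z$, contradicting Theorem~\ref{mainZ}. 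Similarly, $G_{24}$ exists as a $\Z_2$--reflection group --- it is the Weyl group of Dwyer--Wilkerson's $\DI(4)$ --- yet $\F_2[x_8,x_{12},x_{28}]$ is not realizable, which is why the table carries the extra condition $p\neq 2$. The missing ingredient is the paper's Proposition~\ref{polyprop} together with Lemma~\ref{conn}: one first splits the $\Z_p$--root datum of $X$ into a Lie part and exotic parts (classification of root data plus the Dwyer--Wilkerson product splitting theorem), then uses Lannes' $T$--functor to show all centralizers $\cC_X(\nu)$ of elementary abelian subgroups are connected, transfers this to the Lie group $G$ via the Dwyer--Wilkerson formula for $\pi_0$ of centralizers, and invokes Steinberg and Borel to conclude $G$ is $p$--torsion free; the same connectivity criterion (applied to $\cC_X({}_2\dT)$) is what kills $G_{24}$ at $p=2$. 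Without some argument of this kind your proof does not produce the stated restrictions.

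A secondary, smaller gap: ``base change gives $H^*(Y;\F_p)\cong A\otimes_R\F_p$'' is not automatic for cohomology of an arbitrary space over an arbitrary commutative ring --- the K{\"u}nneth spectral sequence for $C^*(Y;R)\otimes_R R/p$ need not converge (the paper's remark gives the counterexample $R=\Z/4$). This is exactly where the hypothesis that $R$ is Noetherian of finite Krull dimension is used, via coherence of $R$ (so that $C^*(Y;R)$ is a complex of flat modules) and the Auslander--Buchsbaum bound on the finitistic flat dimension (Lemma~\ref{finitisticlemma} and Proposition~\ref{mod-p-red}). Your sufficiency direction, by contrast, is essentially the paper's arithmetic-square construction and is fine in outline, modulo the existence results for exotic $p$--compact groups that you correctly invoke.
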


\pagebreak
The space $Y$ is rarely unique; see Remark~\ref{non-uniqueness}.
Moreover, the assumption that $R$ is Noetherian with finite
Krull dimension can be replaced by the
assumption that $Y$ has finite type; cf.~Proposition~\ref{mod-p-red}.

We briefly explain the origin of Tables~\ref{lietable}--\ref{extable2}:
Table~\ref{lietable} lists the simple simply connected compact Lie
groups and $S^1$, and the primes for which the homology of the group
is $p$--torsion free, together with the degrees of the polynomial
generators of the $\F_p$--cohomology of their classifying spaces, but
with $\Spin(2n+1)$, $p$ odd, left out for simplicity, since its
$\F_p$--cohomology agrees with that of $\Sp(n)$. This information goes
back to Borel \cite{borel61} (see Proposition~\ref{lieprop}).

To explain Tables~\ref{extable1} and
\ref{extable2} we recall some facts about reflection groups. The
Shephard--Todd--Chevalley theorem \cite[Thm.~7.2.1]{benson93} says that,
for a field $K$ of characteristic zero and a $K$--vector space $V$, a
finite group $W \leq \GL(V)$ is a reflection group if and only if the
ring $K[V]^W$ of $W$--invariant polynomial functions on $V$ is a
graded polynomial ring, where we grade $K[V]$ by giving the elements of
$V^*$ degree $2$. The {\em degrees} of $W \leq \GL(V)$ is defined to
be the type of this invariant ring. (We warn
the reader that degrees are sometimes defined as half of what is the
convention in this paper.)
Shephard--Todd \cite{ST54} classified the finite irreducible complex reflection
groups as falling into $3$ infinite families (labeled $1$--$3$) and
$34$ sporadic cases (labeled $G_i$, $4\leq i\leq 37$). Clark--Ewing
\cite{CE74} used this to give
a classification of finite
$\Q_p$--reflection groups (with additional clarification by Dwyer--Miller--Wilkerson
\cite[Prop.~5.5, Pf.~of~Thm.~1.5]{DMW92}). This classification says
that for a given prime $p$, a finite $\C$--reflection group gives
rise to a unique $\Q_p$--reflection group if and only if
its character field embeds in $\Q_p$, and all finite
$\Q_p$--reflection groups arise this way. Tables~\ref{extable1} and
\ref{extable2} essentially list the finite irreducible $\Q_p$--reflection
groups which are {\em exotic}, i.e., those whose character field is not
$\Q$, {\em except} the $\Q_2$--reflection group $G_{24}$; but for
simplicity we have also removed the groups $G_i$ for $i =
4$, $5$, $6$, $7$, $10$, $11$, $13$, $15$, $18$, $19$, $25$, $26$, and
$27$, whose degrees is readily obtainable as a multiset union of
the degrees of the remaining groups, at any prime for
which they exist as $\Q_p$--reflection groups.

To illustrate Theorem~\ref{mainthm}, if $\cP = \emptyset$ (i.e., if
$R$ is a $\Q$--algebra) there are no restrictions,
recovering Serre's result mentioned earlier. If $2 \in \cP$, the
possible types of $A$ are unions of the multisets $\{2\}$,
$\{4, 6, \ldots, 2n\}$, and $\{4, 8, \ldots, 4n\}$, in particular
recovering Theorem~\ref{mainZ}. If $\cP =\{3\}$ the type of $A$ is
a union of $\{2\}$, $\{4, 6, \ldots, 2n\}$,
$\{4, 8, \ldots, 4n\}$, $\{4, 8, \ldots, 4(n-1), 2n\}$, $\{4, 12\}$, and
$\{12, 16\}$, and similar lists can easily be compiled for $\cP =
\{p\}$ for any individual prime $p$. For arbitrary $\cP$, it is a
simple combinatorial problem to check whether any
given multiset of degrees is realizable, as is described in the following corollary.

\begin{cor} \label{arithprog}
For any multiset of degrees $\{2d_1, \ldots, 2d_r\}$, there exist
integers $a_1, \ldots, a_m$ and $N$ such that the following conditions are equivalent
for any commutative Noetherian ring $R$ of finite Krull dimension:
\begin{enumerate}
\item
The graded polynomial $R$--algebra $R[x_1,\ldots,x_r]$, with
$\left|x_i\right| = 2d_i$, is isomorphic to $H^*(Y;R)$ for some space $Y$.
\item
Every prime number $p$ which is not a unit in $R$ satisfies $p \equiv
a_i \pmod{N}$ for some~$i$.
\end{enumerate}
\end{cor}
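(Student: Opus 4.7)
The plan is to apply Theorem~\ref{mainthm} and convert the resulting combinatorial condition on the multiset $M = \{2d_1,\ldots,2d_r\}$ into the congruence condition~(2). By the theorem, (1) is equivalent to $\cP \subseteq S$, where
\[
S = \{p \text{ prime}: M \text{ is a union of multisets from Tables~\ref{lietable}--\ref{extable2} occurring at } p\}.
\]
It therefore suffices to exhibit an integer $N$ and residues $a_1,\ldots,a_m$ modulo $N$ such that $S = \{p : p \equiv a_i \pmod{N} \text{ for some } i\}$.

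First I would bound the pool of candidate pieces: every piece of any decomposition of $M$ has its degrees contained in $M$, and in particular bounded by $D = \max_i 2d_i$. Reading the tables, this forces the parameters $n$ in $\SU(n)$, $\Sp(n)$, $\Spin(2n)$, and the parameters $m$, $r$, $n$ in $G(m,r,n)$, $D_{2m}$, $C_m$, to lie in a finite range. Consequently only a finite set $\mathcal{F}$ of ``piece types'' can appear, and there are only finitely many candidate decompositions $D_1,\ldots,D_k$ of $M$ into elements of $\mathcal{F}$.

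Next I would verify, row by row, that the set $A_P$ of primes at which a given piece $P\in\mathcal{F}$ occurs is a union of residue classes modulo some integer $N_P$. The exotic rows of Tables~\ref{extable1}--\ref{extable2} are already stated in this form. For the Lie group rows of Table~\ref{lietable}, the conditions ``$p \geq k$'' with $k \in \{3,5,7\}$ (for $\Spin(2n)$, $G_2$, $F_4$, $E_6$, $E_7$, $E_8$) are, among primes, equivalent to coprimality with $\prod_{q < k} q$, and hence are unions of residue classes modulo that product. The remaining Lie group rows impose no condition, so $A_P$ is all primes.

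Finally, letting $N$ be the least common multiple of all the $N_P$ for $P \in \mathcal{F}$, each $A_P$ becomes a union of residue classes modulo $N$; hence so is $S_{D_i} = \bigcap_{P\in D_i} A_P$ for every $i$, and therefore $S = \bigcup_{i=1}^k S_{D_i}$ is also a union of residue classes modulo $N$. Picking representatives $a_1,\ldots,a_m$ for these classes gives the corollary. The only point requiring any care is the conversion of the ``$p \geq k$'' conditions to honest congruences modulo a single $N$, which as noted is automatic when restricted to primes.
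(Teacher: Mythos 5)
Your proposal is correct and follows essentially the same route as the paper's proof: reduce to Theorem~\ref{mainthm}, observe there are only finitely many candidate decompositions, convert each ``occur for'' condition (including the ``$p\geq k$'' ones, via coprimality to $\prod_{q<k}q$) into a union of congruence classes, and close up under finite intersections and unions modulo a common $N$. No gaps.
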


We give an algorithm for finding $N$ and $a_1,\ldots,a_m$ in the
course of the proof. Finding explicit generators for the
commutative monoid of realizable multisets of degrees for a given collection
of primes $\cP$ is a harder combinatorial problem in general.

\medskip
The proof of the main theorem spans three short sections. The first two sections
reduce the problem to $p$--compact groups: In
Section~\ref{RtoFp-section} we show that if $H^*(Y;R)$ is a
polynomial algebra then the same holds for $H^*(Y;\F_p)$ for all prime numbers
$p$ which are not units in $R$, and hence that the $\F_p$--completion
$Y\pcom$ is the classifying space of a
$p$--compact group---the proof has a small ring theoretic twist due to
finite-type problems coming from the fact that we are working with
cohomology. Conversely Section~\ref{FptoR-section} constructs a space
$Y$ with polynomial $R$--cohomology from a collection of
$p$--compact groups with compatible cohomology rings, for $p$ any
non-unit in $R$.
Finally, in Section~\ref{pcg-section}, we determine the graded
polynomial $\F_p$--algebras on even degree generators which occur as
the $\F_p$--cohomology of the classifying space of a $p$--compact
group.
The key Proposition~\ref{polyprop} reduces this question
to separate questions for classifying spaces of compact Lie groups and
classifying spaces of exotic $p$--compact groups (without using
classification results on $p$--compact groups). These are then easily
solved, the first one essentially by Borel, and the second just using
earlier existence results for exotic $p$--compact groups. Combining the
earlier steps now easily establishes the main result, Theorem~\ref{mainthm},
and its corollaries. We remark that Notbohm's approach to the case $R
= \F_p$, $p$ odd, in \cite{notbohm99} differs from the one given here
in Section~\ref{pcg-section}.
Notbohm instead uses an argument relying on various earlier fairly
elaborate case-by-case calculations of invariant rings, which in
particular does not extend to the case of $\F_2$.

\medskip
\noindent
{\em Notation and recollections:} 
We stress that by a graded polynomial algebra we mean a graded algebra
generated by a set of homogeneous elements, which are algebraically
independent (i.e., we do not allow odd degree exterior
generators). For the theorems of the paper a
space can be taken to mean any topological space, though for the
purposes of the proofs we may without loss of generality restrict
ourselves to CW--complexes (or even simplicial sets), as we will often
tacitly do. We constantly use the theory of $p$--compact groups, and
here recall some pertinent facts, referring to e.g.,
\cite{DW94,dwyer98,AGMV08,AG08classification} for more information. A
$p$--compact group consists of a
triple $(X,BX,e\co X \xrightarrow{\simeq} \Omega BX)$ such that $BX$ is an
$\F_p$--complete space and $X$ has finite
$\F_p$--cohomology. Proposition~\ref{polylemma} recalls that spaces
with polynomial $\F_p$--cohomology ring
always come from $p$--compact
groups. A standard result, central to this paper, states that
for a connected $p$--compact group $X$,
$H^*(BX;\F_p)$ is a polynomial algebra concentrated in
even degrees if and only if $H^*(X;\Z_p)$ is torsion free if and only
if $H^*(BX;\Z_p) \xrightarrow{\cong} H^*(BT;\Z_p)^{W_X}$,
where $T$ and $W_X$ are respectively the maximal torus and the Weyl
group of $X$. (Parts of this result are due to Borel and parts to
Dwyer--Miller--Wilkerson; see \cite[Thm.~12.1]{AGMV08}).)
In this case the type of $H^*(BX;\F_p)$ equal the
degrees of $W_X$, which act as a reflection group on $\pi_2(BT)
\otimes \Q$ with invariant ring $\Q_p[\pi_2(BT) \otimes \Q]^{W_{X}}
\cong H^*(BX;\Z_p) \otimes \Q$ \cite[Thm.~9.7]{DW94}---this provides
the link between polynomial cohomology rings concentrated
in even degrees, torsion
free $p$--compact groups, and finite $\Z_p$-- and $\Q_p$--reflection
groups. A couple of times in the proofs we use the word root
datum for which we refer to \cite[Section~8]{AG08classification},
although we do not use anything which is not already in
\cite{dw:center} in a slightly different language.

\medskip
\noindent{\em Acknowledgments:}
We thank Bill Dwyer for helpful conversations and Hans-Bj{\o}rn
Foxby for reminding us of what he taught us as undergraduates
about finitistic dimensions of commutative Noetherian rings.
We also thank Allen Hatcher for his interest and the referee for
helpful comments.


\section{From $R$ to $\F_p$} \label{RtoFp-section}

In this section we show how a polynomial cohomology ring over $R$
produces polynomial cohomology rings over $\F_p$ for the prime numbers
$p$ which are not units in $R$.

\begin{prop} \label{RtoFp}
Suppose that $R$ is a commutative Noetherian ring of finite Krull
dimension, and $Y$ is a space such that $H^*(Y;R)$ is a polynomial
$R$--algebra in finitely many variables, each in positive degree. If a
prime $p$ is not a unit in $R$ then $H^*(Y;\F_p)$ is a polynomial
$\F_p$--algebra with the same type and $Y\pcom$ is
the classifying space of a $p$--compact group.
\end{prop}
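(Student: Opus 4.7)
The plan is to reduce the coefficient ring from $R$ to $\F_p$ in stages while preserving polynomiality, and then invoke the standard lemma (Proposition~\ref{polylemma}) that a space with polynomial $\F_p$--cohomology in finitely many generators has $\F_p$--completion equal to the classifying space of a $p$--compact group.

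The starting observation is that from $H^*(Y;R)\cong R[x_1,\ldots,x_r]$ each graded piece $H^n(Y;R)$ is automatically a \emph{free} $R$--module of finite rank (equal to the number of monomials of degree $n$). This puts us in position to apply a change-of-coefficient argument. When $f\in R$ is a non-zerodivisor, the Bockstein long exact sequence associated to $0\to R\xrightarrow{f}R\to R/fR\to 0$ collapses, using the $f$--torsion freeness of each $H^n(Y;R)$, to an isomorphism $H^n(Y;R/fR)\cong H^n(Y;R)/fH^n(Y;R)$. Hence $H^*(Y;R/fR)$ remains a polynomial algebra of the same type. Iterating this, and reducing to quotients $R/\mathfrak{q}$ by minimal primes $\mathfrak{q}$ chosen inside a maximal ideal containing $p$ whenever the element we wish to kill becomes a zero-divisor, we can descend in finitely many steps from $R$ to an $\F_p$--algebra $\bar R$ with $H^*(Y;\bar R)$ polynomial of the correct type. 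The Noetherian and finite Krull dimension hypotheses are essential here, both to guarantee that the reduction terminates and to control the ring-theoretic invariants along the way; this uses Bass's theorem that the finitistic projective dimension of $R$ is at most $\dim R$, which is the ``small ring-theoretic twist'' and the finitistic dimension input mentioned in the introduction and acknowledgments.

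Finally, a residue field $k$ of $\bar R$ at a maximal ideal has characteristic $p$, so $H^*(Y;k)$ is a polynomial $k$--algebra of the same type; polynomiality descends from $k$ down to $\F_p$ via the faithful flatness of the field extension $\F_p\subset k$. Having shown $H^*(Y;\F_p)\cong\F_p[x_1,\ldots,x_r]$ with the same degrees, Proposition~\ref{polylemma} immediately produces the $p$--compact group $X$ with $BX\simeq Y\pcom$. The chief obstacle is the reduction step past a zero-divisor: this is a non-flat change of coefficient ring, so the universal coefficient formula is not automatic in the absence of a finite-type assumption on $Y$, and the ring-theoretic input from Noetherianity and finite Krull dimension is precisely what makes it possible to carry out the reduction in only finitely many well-behaved steps.
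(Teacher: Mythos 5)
Your overall strategy (reduce the coefficients to $\F_p$, then apply Proposition~\ref{polylemma}) is the right one, and you have correctly located the hard point: the passage across a zero-divisor is a non-flat change of coefficients. But at exactly that point your argument has a genuine gap. The proposed fix --- ``reducing to quotients $R/\mathfrak{q}$ by minimal primes whenever the element we wish to kill becomes a zero-divisor'' --- is not an argument: the surjection $R \to R/\mathfrak{q}$ is itself a non-flat base change not of the form $R\xrightarrow{f}R$, so no Bockstein sequence is available, and you give no reason why $H^*(Y;R/\mathfrak{q})$ should again be polynomial. Already for $R=\Z/4$ and $p=2$ (the case the paper explicitly flags as the source of complications) your procedure stalls at the very first step: $2$ is a zero-divisor, the quotient you need is $R/(2)=\F_2$, and there is no regular element to descend along. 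Citing Bass's bound on the finitistic dimension does not by itself produce the missing isomorphism $H^*(Y;R)\otimes_R R/p \cong H^*(Y;R/p)$; you never say how the finitistic-dimension input enters an actual deduction.

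What closes the gap in the paper is a single, non-iterative base change $R\to R/p$, justified by Lemma~\ref{finitisticlemma}: the singular cochain modules $C^n(Y;R)$ are flat because they are products of copies of $R$ and $R$ is Noetherian, hence coherent (an input you never mention); the cohomology is flat because it is free; and finiteness of the finitistic flat dimension (Auslander--Buchsbaum, bounded by the Krull dimension) forces the images of the differentials to be flat, so the K{\"u}nneth map $H^*(Y;R)\otimes_R R/p\to H^*(Y;R/p)$ is an isomorphism even when $p$ is a zero-divisor. One then compares with $H^*(Y;\F_p)\otimes_{\F_p}R/p$ via the universal coefficient theorem over the field $\F_p$ --- which requires first establishing that $H_*(Y;\F_p)$ is degreewise finite, a point you also elide in your final descent from the residue field $k$ to $\F_p$ --- and descends polynomiality from $R/p$ to $\F_p$ by Lemma~\ref{poly-check}. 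Without something equivalent to Lemma~\ref{finitisticlemma}, your reduction chain cannot get past its first zero-divisor.
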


The proof will occupy the rest of this section. The proof is
straightforward if e.g.\ $R\subseteq \Q$ (using the
exact sequence $0 \to R \xrightarrow{p} R \to \F_p \to 0$), but in
general it is a bit more subtle, with complications arising even for
$R = \Z/4$.
First we explain the well known fact that spaces with polynomial
$\F_p$--cohomology ring give rise to $p$--compact groups.

\begin{prop} \label{polylemma}
Let $Y$ be a space. If $H^*(Y;\F_p)$ is a finitely generated
polynomial algebra over $\F_p$, then $Y\pcom \simeq BX$ for a
$p$--compact group $X$. If furthermore $H^1(Y;\F_p)=0$, then $X$ is
connected.
\end{prop}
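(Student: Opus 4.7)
The plan is to verify the two defining conditions of a $p$--compact group for the prospective triple $(X, Y\pcom, e)$ with $X := \Omega(Y\pcom)$: namely that $Y\pcom$ is $\F_p$--complete, and that $X$ has finite $\F_p$--cohomology. The first condition holds by construction of the Bousfield--Kan $\F_p$--completion. Moreover, since $H^*(Y;\F_p)$ is a finitely generated polynomial algebra on positive-degree generators, it has finite type in each degree, so $Y$ is $\F_p$--good and the natural map induces an isomorphism $H^*(Y\pcom;\F_p) \cong H^*(Y;\F_p) \cong \F_p[x_1,\ldots,x_n]$.

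To bound $H^*(X;\F_p)$ I would apply the Eilenberg--Moore spectral sequence of the path-loop fibration $X \to PY\pcom \to Y\pcom$, whose $E_2$--page is
$$E_2 = \Tor_{H^*(Y\pcom;\F_p)}(\F_p,\F_p) \Longrightarrow H^*(X;\F_p).$$
For the polynomial algebra $\F_p[x_1,\ldots,x_n]$ with $|x_i|=d_i>0$, the Koszul resolution of $\F_p$ gives $\Tor = \Lambda_{\F_p}(\sigma x_1,\ldots,\sigma x_n)$, an exterior algebra on generators in degrees $d_i-1$, which is finite-dimensional over $\F_p$. Consequently $H^*(X;\F_p)$ is finite-dimensional, verifying the second condition, so $Y\pcom \simeq BX$ with $X$ a $p$--compact group. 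The hardest technical point is convergence of the EMSS when $Y\pcom$ is not simply connected; this is handled by standard convergence criteria (e.g.\ those of Dwyer and of Bousfield--Kan) which apply to $\F_p$--complete spaces of finite type.

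For the connectedness statement, if $H^1(Y;\F_p)=0$ then also $H^1(Y\pcom;\F_p)=0$. The group $\pi_1(Y\pcom)$ is a pro--$p$ group by general properties of $\F_p$--completion, and any nontrivial pro--$p$ group has a nontrivial Frattini quotient and hence nonzero mod--$p$ cohomology in degree one. It follows that $\pi_1(Y\pcom)$ is trivial, so $Y\pcom$ is simply connected and $X = \Omega(Y\pcom)$ is connected.
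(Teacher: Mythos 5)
Your core computation---the Eilenberg--Moore spectral sequence of the path--loop fibration with $E_2 = \Tor_{\F_p[x_1,\ldots,x_n]}(\F_p,\F_p)$ an exterior algebra on $n$ generators, hence finite---is exactly the paper's argument. But there is a genuine gap upstream: you assert that finite type makes $Y$ $\F_p$--good and that $Y\pcom$ is $\F_p$--complete ``by construction.'' Neither is true in general: the Bousfield--Kan completion $Y\pcom$ is $\F_p$--complete precisely when $Y$ is $\F_p$--good, and finite type does not suffice for goodness (a wedge of two circles is already $\F_p$--bad, by Bousfield). The criterion that actually applies here is \cite[Prop.~VII.3.2]{bk}, which requires $H_1(Y;\F_p)=0$. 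The same hypothesis is what makes $Y\pcom$ simply connected and hence makes Dwyer's convergence theorem \cite{dwyer74} for the Eilenberg--Moore spectral sequence applicable; your appeal to ``standard convergence criteria for $\F_p$--complete spaces of finite type'' is not a real criterion---convergence needs a nilpotent $\pi_1$--action, which you have not established when $H^1(Y;\F_p)\neq 0$.

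The missing idea is the reduction that opens the paper's proof: when $H^1(Y;\F_p)\neq 0$ (which, note, can only happen for $p=2$, since over $\F_p$ with $p$ odd a degree--one class squares to zero and cannot be a polynomial generator), one replaces $Y$ by the fiber of the canonical map $Y \to K(H_1(Y;\F_p),1)$, which kills $H_1$ and reduces everything to the case your argument does handle. For the connectedness statement your conclusion is correct, but the assertion that $\pi_1(Y\pcom)$ is pro--$p$ ``by general properties of $\F_p$--completion'' again presupposes goodness; the clean route is that \cite[Prop.~VII.3.2]{bk} directly gives simple connectivity of $Y\pcom$ once $H_1(Y;\F_p)=0$.
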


\begin{proof}
By considering the fiber of the canonical map $Y \to K(H_1(Y;\F_p),1)$
one easily reduces to the case where $H_1(Y;\F_p) = 0$
(cf.~\cite[Pf.~of~Thm.~1.4]{AG08classification}), and this is in fact
also the only case relevant for our main theorem. By
\cite[Prop.~VII.3.2]{bk} $Y$ is $\F_p$--good and $Y\pcom$ is
$\F_p$--complete and simply connected. In particular the
Eilenberg--Moore spectral sequence of the path-loop fibration of
$Y\pcom$ converges \cite{dwyer74} and shows that $H^*(\Omega
Y\pcom;\F_p)$ is finite dimensional over $\F_p$. Hence $Y\pcom$ is the
classifying space of a $p$--compact group, which is connected since
$Y\pcom$ is simply connected.
\end{proof}

Next we deal with the finiteness restrictions
usually associated with universal coefficient theorems in
cohomology. Recall that the {\em finitistic flat dimension}
of a commutative ring $R$ is defined to be the supremum over the flat
dimensions of all $R$--modules with finite flat dimension.

\begin{lemma} \label{finitisticlemma}
Let $R$ be a commutative ring, and $C_*$ a chain complex of
$R$--modules (with differential of degree $-1$) concentrated in non-positive degrees such that $C_n$ and
$H_n(C_*)$ are flat for all $n$. If the finitistic flat dimension of
$R$ is finite, then $H_n(C_*) \otimes_R M \xrightarrow{\cong} H_n(C_*
\otimes_R M)$ for any $R$--module $M$.
\end{lemma}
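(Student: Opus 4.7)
The plan is to analyze the complex via its cycles $Z_n := \ker(d_n)$ and boundaries $B_n := \im(d_{n+1})$, which fit in two short exact sequences, namely $(a)$ $0 \to Z_n \to C_n \to B_{n-1} \to 0$ and $(b)$ $0 \to B_n \to Z_n \to H_n(C_*) \to 0$. Since $C_n$ is flat, the $\Tor$ long exact sequence for $(a)$ gives $\Tor_{i+1}(B_{n-1},M) \cong \Tor_i(Z_n,M)$ for $i \geq 1$. Since $H_n(C_*)$ is flat, $(b)$ yields $\Tor_i(B_n,M) \cong \Tor_i(Z_n,M)$ for $i \geq 1$. Combining these provides the dimension-shift isomorphism $\Tor_i(B_n,M) \cong \Tor_{i+1}(B_{n-1},M)$ for all $i \geq 1$ and all $R$--modules $M$.

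Because $C_*$ is concentrated in non-positive degrees, $B_n = 0$ for $n \geq 0$, so the sequence $(b)$ for $n=0$ identifies $Z_0$ with the flat module $H_0(C_*)$. Feeding this into $(a)$ at $n=0$ and using the standard inequality $\fd(B_{n-1}) \leq \max(\fd(Z_n)+1,\fd(C_n))$ gives $\fd(B_{-1}) \leq 1$. Alternately applying the inequalities derived from $(b)$ and $(a)$, induction shows $\fd(B_n) < \infty$ for every $n \leq 0$. Let $D$ be the finitistic flat dimension of $R$, which is finite by hypothesis; then $\fd(B_n) \leq D$ for every $n$.

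Iterating the dimension shift $D$ times now gives $\Tor_1(B_n,M) \cong \Tor_{D+1}(B_{n-D},M) = 0$ for every $n$ and every $R$--module $M$. Plugging this vanishing into $(a)$ tensored with $M$ shows that $0 \to Z_n \otimes_R M \to C_n \otimes_R M \to B_{n-1} \otimes_R M \to 0$ is short exact; plugging it (together with the flatness of $H_n(C_*)$) into $(b)$ tensored with $M$ shows that $B_n \otimes_R M \hookrightarrow Z_n \otimes_R M$. Chasing these identifications through the factorization $C_n \twoheadrightarrow B_{n-1} \hookrightarrow Z_{n-1} \hookrightarrow C_{n-1}$ of the differential, tensored with $M$, identifies the $n$--cycles of $C_* \otimes_R M$ with $Z_n \otimes_R M$ and the $n$--boundaries with $B_n \otimes_R M$. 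The desired isomorphism $H_n(C_*) \otimes_R M \cong H_n(C_* \otimes_R M)$ is then precisely sequence $(b)$ tensored with $M$.

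The main hurdle is the finiteness step: one must know that every $B_n$ has finite flat dimension before the finitistic bound $D$ can be invoked uniformly. This is exactly where the non-positive grading gets used, providing the base case $B_0 = 0$ to start the induction; without such boundedness on one side, no initial finite flat dimension estimate is available and the shift argument collapses. Everything else reduces to routine diagram chases in the $\Tor$ long exact sequences attached to $(a)$ and $(b)$.
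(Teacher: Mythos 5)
Your proof is correct and follows essentially the same route as the paper's: both arguments reduce the lemma to showing that the boundary modules $\im(d_n)$ are flat, using the same two short exact sequences, the vanishing of boundaries in non-negative degrees as the base case, and the finite finitistic flat dimension hypothesis to rule out flat dimensions growing without bound as $n\to-\infty$. The only cosmetic differences are that you organize the bookkeeping as ``first bound $\fd(B_n)$ by $D$, then iterate the $\Tor$ dimension shift to kill $\Tor_1$'' where the paper phrases it as a dichotomy, and that you carry out the concluding K{\"u}nneth-type diagram chase by hand where the paper simply cites the K{\"u}nneth formula.
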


\begin{proof}
Let $\fd(-)$ denote the flat dimension of an $R$--module. By the
K{\"u}nneth formula \cite[Thm.~3.6.1]{weibel94} it suffices to show
that $\im(C_n \xrightarrow{d_n} C_{n-1})$ is flat for all $n$. To see
this, first note that by the short exact sequence
$$
0 \to \im(d_{n+1}) \to \ker(d_{n}) \to H_{n}(C_*) \to 0,
$$
we have $\fd\bigl(\im(d_{n+1})\bigr) = \fd\bigl(\ker(d_{n})\bigr)$, since
$H_{n}(C_*)$ is flat, cf. \cite[Ex.~4.1.2(3)]{weibel94}. Now, since
$C_{n}$ is flat, the short exact sequence
$$
0 \to \ker(d_{n}) \to C_{n} \to \im(d_{n}) \to 0
$$ 
shows that either $\im(d_n)$ and $\im(d_{n+1})$ are flat or
$\fd\bigl(\im(d_{n+1})\bigr) = \fd\bigl(\ker(d_{n})\bigr) =
\fd\bigl(\im(d_{n})\bigr)-1$.
Since $\im(d_n)=0$ is flat for $n$ positive, it follows that either $\im(d_n)$
is flat for all $n$ or there exists an $n_0$ such that $\im(d_n)$ is
flat for $n\geq n_0$ and $\fd\bigl(\im(d_n)\bigr) = n_0-n$ for $n\leq
n_0$. Since $R$ has finite finitistic flat dimension the second possibility
cannot occur, and we are done.
\end{proof}

\begin{rem}
A way to view the assumptions in the lemma is that they ensure
convergence of the K{\"u}nneth spectral sequence, derived from the
spectral sequence of a double complex (see \cite[\S5.6]{weibel94}). In
complete generality the K{\"u}nneth spectral sequence need not
converge, the standard counterexample being $R =\Z/4$, $C_* = \cdots
\xrightarrow{2} \Z/4 \xrightarrow{2} \Z/4 \xrightarrow{2} \cdots$ and
$M=\Z/2$.
\end{rem}

The preceding lemma, together with a result
in commutative ring theory, gives the
following result about the cohomology of spaces.

\begin{prop} \label{mod-p-red}
Let $R$ be a commutative ring and $Y$ a space such that $H^*(Y;R)$ is finite
free over $R$ in each degree.
If either $R$ is Noetherian with finite Krull dimension or $Y$ has
finite type then
$$
H^*(Y;R) \otimes_R R/p \xrightarrow{\cong} H^*(Y;R/p)
\xleftarrow{\cong} H^*(Y;\F_p) \otimes_{\F_p} R/p
$$
as $R/p$--algebras.
\end{prop}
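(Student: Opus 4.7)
The plan is to establish the two displayed isomorphisms separately. For the first, $H^*(Y;R) \otimes_R R/p \xrightarrow{\cong} H^*(Y;R/p)$, I would analyze the cochain complex $C^* := C^*(Y;R)$---the cellular cochain complex of a finite-type CW model of $Y$ in the finite-type case, the singular cochain complex in general. A preliminary observation is that $C^n(Y;R) \otimes_R R/p \cong C^n(Y;R/p)$: both are $(R/p)^{S_n}$ for the set $S_n$ indexing the (singular or cellular) $n$-simplices, since $R^{S_n}/pR^{S_n}$ is manifestly $(R/p)^{S_n}$. Hence the first isomorphism reduces to the K{\"u}nneth-style statement $H^n(C^*) \otimes_R R/p \xrightarrow{\cong} H^n(C^* \otimes_R R/p)$.

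I would deduce this statement by reindexing $C^*$ as a chain complex in non-positive degrees and applying Lemma~\ref{finitisticlemma} with $M = R/p$. The hypotheses required are: $H^n(Y;R)$ flat (even finitely generated free); $C^n(Y;R) = R^{S_n}$ flat (finitely generated free in the finite-type case, and otherwise a product of copies of $R$, which is flat over any Noetherian ring); and $R$ of finite finitistic flat dimension. The last is the \emph{commutative-algebra result} alluded to in the paper: for $R$ Noetherian of finite Krull dimension, a classical theorem of Bass/Jensen (in wider generality due to Raynaud--Gruson) bounds the finitistic flat dimension by the Krull dimension. In the `$Y$ finite type' alternative, where no such bound on $R$ is given, the reindexed complex is a bounded-above complex of finitely generated free modules with finitely generated free homology; such a complex is $K$-projective, so the obvious quasi-isomorphism from its cohomology (viewed as a complex with zero differentials) is a chain homotopy equivalence, and chain homotopies are preserved by $\otimes_R R/p$, yielding the desired statement without any assumption on finitistic dimensions.

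For the second isomorphism I would combine the first isomorphism with universal coefficients over the field $\F_p$. Since $p$ annihilates $R/p$, the cochain complex $C^*(Y;R/p)$ is naturally $\Hom_{\F_p}(C_*(Y;\F_p), R/p)$, and universal coefficients over a field (with vanishing $\operatorname{Ext}^1$-terms) gives $H^n(Y;R/p) \cong \Hom_{\F_p}(H_n(Y;\F_p), R/p)$. By the first isomorphism the left-hand side is finitely generated free over $R/p$. But $\Hom_{\F_p}(V, R/p) \cong (R/p)^{\dim_{\F_p} V}$ is finitely generated over $R/p$ only when $\dim_{\F_p} V < \infty$, which forces $H_n(Y;\F_p)$ to be finite-dimensional. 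Under that finite-dimensionality, $\Hom_{\F_p}(H_n(Y;\F_p), R/p)$ is canonically identified with $H^n(Y;\F_p) \otimes_{\F_p} R/p$, giving the second isomorphism.

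The main technical obstacle is the dichotomous nature of the two hypotheses for the first isomorphism: the Noetherian/finite-Krull-dimension case uses the classical bound on finitistic flat dimension to feed Lemma~\ref{finitisticlemma}, whereas the finite-type case sidesteps this via $K$-projectivity of bounded-above complexes of finitely generated projectives. Compatibility with the $R/p$-algebra structure in both isomorphisms is routine from the naturality of cup products under change-of-coefficients maps.
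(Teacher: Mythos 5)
Your proof is correct, and for the Noetherian branch and for the second isomorphism it coincides with the paper's argument: the paper likewise observes that $C^*(Y;R)$ is a complex of flat modules because arbitrary products of flat modules over a Noetherian ring are flat (Chase), invokes the bound of the finitistic flat dimension by the Krull dimension (the paper cites Auslander--Buchsbaum, with Bass for the sharper statement, rather than Bass/Jensen/Raynaud--Gruson, but it is the same classical fact), feeds this into Lemma~\ref{finitisticlemma} with $M=R/p$, and then derives the second isomorphism exactly as you do, via $H^*(Y;R/p)\cong \Hom_{\F_p}(H_*(Y;\F_p),R/p)$ and the observation that finite generation of the left side over $R/p$ forces $H_*(Y;\F_p)$ to be finite in each degree. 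The genuine divergence is in the finite-type branch: the paper handles it by citing Dold's universal coefficient theorem (Satz~5.2 of ``Universelle Koeffizienten''), applied to the $\Z$--cochain complex of a finite-type CW model, and explicitly warns that Dold's proof is more involved than the ordinary K{\"u}nneth theorem; you instead work directly with the bounded-above complex of finitely generated free $R$--modules $C^*(Y;R)=C^*(Y;\Z)\otimes_\Z R$, use projectivity of the (finite free) cohomology to split $Z^n\to H^n$ and produce a quasi-isomorphism from the cohomology with zero differentials, and conclude it is a chain homotopy equivalence because both sides are bounded-above complexes of projectives --- which any additive functor, in particular $-\otimes_R R/p$, preserves. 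That argument is complete and self-contained, trades an external citation for a short standard homological-algebra argument, and has the mild advantage of making transparent exactly which hypotheses (freeness of the cochains \emph{and} of the cohomology over $R$) are being used; what it does not give you is the extra generality of Dold's theorem itself, which is not needed here.
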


\begin{proof} 
Assume first that $Y$ is arbitrary and that
that $R$ is Noetherian with finite Krull dimension.
Then the singular cochain complex
$C^*(Y;R)$ is a complex of flat modules, since over a commutative
Noetherian ring arbitrary products of flat modules are again flat
(i.e., commutative Noetherian rings are coherent; see
\cite[Thm.~2.1]{chase60} or \cite[Ex.~VI.4, p. 122]{CE56}). By a
result of Auslander--Buchsbaum \cite[Thm.~2.4]{AB58}, the finitistic
flat dimension of $R$ is bounded above by its Krull dimension
(in fact they differ by at most $1$ by a result of Bass
\cite[Cor.~5.3]{bass62}), and in particular the assumptions imply that
it is finite. Hence, since obviously $C^*(Y;R) \otimes_R R/p \cong
C^*(Y;R/p)$, Lemma~\ref{finitisticlemma} implies that $H^*(Y;R)
\otimes_R R/p \xrightarrow{\cong} H^*(Y;R/p)$. Now
$$
H^*(Y;R/p) = H(C^*(Y;R/p)) = H(\Hom_{\F_p}(C_*(Y;\F_p),R/p))
\xrightarrow{\cong} \Hom_{\F_p}(H_*(Y;\F_p),R/p)
$$
as $R/p$--modules, and in particular $H_*(Y;\F_p)$ is finite in each
degree. Hence $H^*(Y;\F_p) \otimes_{\F_p} R/p \xrightarrow{\cong}
\Hom_{\F_p}(H_*(Y;\F_p),R/p)$, which combined with the previous
isomorphisms gives the result under the ring theoretic assumption on $R$. 

Now assume $Y$ has finite type, and that $R$ is an arbitrary
commutative ring. Let $C^*(Y;-)$ denote the cellular cochain
complex, and note that, by the finite type assumption, $C^*(Y;\Z) \otimes_{\Z} R
\xrightarrow{\cong} C^*(Y;R)$. Now  $H^*(Y;R) \otimes_R R/p \xrightarrow{\cong} H^*(Y;R/p)$
by  a result of Dold \cite[Satz~5.2]{dold62}, applied to the $\Z$--chain complex $C_* =
C^{-*}(X;Z)$, $\Lambda =R$ and $M =R/p$. (Note that the proof of
Dold's theorem is more involved than the proof of the ordinary K{\"u}nneth
theorem.) The second isomorphism in the proposition now follows as above.
\end{proof}

\begin{lemma} \label{poly-check}
Let $A$ be a graded connected $\F_p$--algebra which is finite
dimensional in each degree, and $B$
an $\F_p$--algebra (viewed as a graded algebra concentrated in degree
$0$). If $A \otimes_{\F_p} B$ is a graded polynomial
$B$--algebra, then $A$ is a graded polynomial $\F_p$--algebra with the
same type.
\end{lemma}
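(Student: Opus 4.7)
The plan is to identify $QA$ via the isomorphism $Q(A\otimes B) \cong QA \otimes B$, lift a homogeneous basis of $QA$ to algebra generators of $A$, and then match Poincar\'e series in each degree to rule out relations. Throughout I assume $B \neq 0$, since otherwise $A\otimes B = 0$ and there is nothing to prove.

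Since $A$ is connected with $A^0 = \F_p$ and $B$ sits in degree zero, one has $(A\otimes_{\F_p} B)^+ = A^+ \otimes_{\F_p} B$ and $((A\otimes B)^+)^2 = (A^+)^2 \otimes B$, hence $Q(A\otimes_{\F_p} B) \cong QA \otimes_{\F_p} B$ as graded $B$-modules. Writing $A\otimes B \cong B[y_1, \ldots, y_r]$ with $|y_i| = d_i$ then displays the right hand side as a free graded $B$-module with basis in degrees $d_1, \ldots, d_r$; since $B$ is a nonzero free $\F_p$-module, this forces $\dim_{\F_p}(QA)^d = \#\{i : d_i = d\}$ for every $d$. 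Lift a homogeneous $\F_p$-basis of $QA$ to elements $x_1, \ldots, x_r \in A$ with $|x_i| = d_i$; a routine degree induction using graded connectedness and finite dimensionality of each $A^n$ shows the $x_i$ generate $A$ as an $\F_p$-algebra, producing a graded surjection $\phi\co \F_p[x_1, \ldots, x_r] \to A$.

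To see that $\phi$ is an isomorphism it suffices to compare $\F_p$-dimensions in each degree: the hypothesis gives $\dim_{\F_p} A^n$ equal to the $B$-rank of $(A\otimes B)^n = A^n\otimes_{\F_p} B$, which is $\dim_{\F_p} \F_p[y_1,\ldots,y_r]^n$ since $A\otimes B \cong B[y_1,\ldots,y_r]$. As the multisets $\{|x_i|\}$ and $\{|y_i|\}$ agree by the previous paragraph, this coincides with $\dim_{\F_p} \F_p[x_1,\ldots,x_r]^n$, and a graded surjection between finite dimensional $\F_p$-vector spaces of the same dimension in each degree is an isomorphism. The only delicate point I foresee is the identification $Q(A\otimes B) \cong QA \otimes B$, which uses essentially that $B$ is concentrated in degree zero; once that is in hand, everything reduces to formal bookkeeping with degrees and dimensions.
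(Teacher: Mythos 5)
Your argument is correct and follows essentially the same route as the paper's proof: identify $Q(A\otimes_{\F_p} B)\cong Q(A)\otimes_{\F_p} B$, lift a homogeneous basis of $Q(A)$ to algebra generators of $A$, and conclude by a degreewise dimension count. The only cosmetic difference is that the paper first reduces modulo a maximal ideal to the case where $B$ is a field and checks algebraic independence inside $A\otimes_{\F_p}B$ (descending to $A$ by faithful flatness), whereas you stay over the commutative ring $B$ and compare Poincar\'e series of $A$ with those of the polynomial ring directly, invoking invariance of basis number; both versions rest on the same observation that a graded surjection between objects of equal finite dimension in each degree is an isomorphism.
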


\begin{proof}
By reducing $B$ modulo a maximal ideal, we can without restriction assume
that $B$ is a field. Since $B$ is a flat $\F_p$--module, $Q(A)
\otimes_{\F_p} B \xrightarrow{\cong} Q(A \otimes_{\F_p} B)$, where
$Q(\cdot)$ denotes the
module of indecomposable elements. Picking a basis for the
$\F_p$--vector space $Q(A)$ and lifting these to $A$ produces a set of
generators for $A$ as an $\F_p$--algebra, which maps to a set of generators
for $A \otimes_{\F_p}B$ as a $B$--algebra. By construction they form
a $B$--basis for $Q(A \otimes_{\F_p} B)$. Hence they have to be algebraically
independent, since $A \otimes_{\F_p} B$ is assumed to be a graded polynomial
$B$--algebra (to see this note that mapping polynomial generators of
$A \otimes_{\F_p} B$ to the constructed generators will produce an
epimorphism $A \otimes_{\F_p} B \to A \otimes_{\F_p} B$ which also has to be a
monomorphism since $A \otimes_{\F_p} B$ is finite dimensional over $B$ in each
degree). Since $B$ is a faithfully flat $\F_p$--module, the generators are
algebraically independent in $A$ as well, and hence $A$ is a graded
polynomial $\F_p$--algebra.
\end{proof}

\begin{proof}[Proof of Proposition~\ref{RtoFp}]
By Proposition~\ref{mod-p-red} and Lemma~\ref{poly-check},
$H^*(X;\F_p)$ is a polynomial algebra with the same type as
$H^*(X;R)$. It hence follows from Proposition~\ref{polylemma} that $X$
is $\F_p$--good and $X\pcom$ is the classifying space of a
$p$--compact group.
\end{proof}


\section{From $\F_p$ to $R$} \label{FptoR-section}

In this section we show how spaces with polynomial $\F_p$--cohomology
ring at different primes can be glued together, provided that they
have the same type.

\begin{prop} \label{glueing}
Let $\cP$ be a set of prime numbers, $J$ the set of prime numbers not
in $\cP$, and
$\{2d_1,\ldots,2d_r\}$ a fixed multiset. Suppose that for each $p \in
\cP$ there exists a space $B_p$ such that $H^*(B_p;\F_p)$ is a
polynomial algebra with type $\{2d_1,\ldots,2d_r\}$. Then there exists a
simply connected finite type space $Y$, such that
$H^*(Y;\Z[J^{-1}])$ is a polynomial algebra over $\Z[J^{-1}]$ with
generators in degrees $\{2d_1,\ldots,2d_r\}$.
More generally, if $R$ is a commutative ring with $R = R[J^{-1}]$,
then $H^*(Y;R)$ is also a polynomial $R$--algebra with generators in
the same degrees.
\end{prop}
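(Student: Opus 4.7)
For each $p\in\cP$, since all polynomial generators lie in positive even degrees, $H^1(B_p;\F_p)=0$, so Proposition~\ref{polylemma} produces a connected $p$-compact group $X_p$ with $(B_p)\pcom\simeq BX_p$. The torsion-free criterion recalled in the introduction then gives $H^*(BX_p;\Z_p)\cong\Z_p[y_{1,p},\ldots,y_{r,p}]$, polynomial of type $\{2d_1,\ldots,2d_r\}$. Classifying these generators provides a map $\phi_p\co BX_p\to\prod_i K(\Z_p,2d_i)$ inducing an isomorphism on $\Q_p$-cohomology, and in particular a rational equivalence.

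I plan to glue the $BX_p$'s via Sullivan's arithmetic fracture square. Set $T:=\prod_i K(\Z[J^{-1}],2d_i)$, so that $T\pcom\simeq\prod_i K(\Z_p,2d_i)$ for $p\in\cP$ (and $T\pcom\simeq *$ for $p\in J$) and $T_\Q\simeq\prod_i K(\Q,2d_i)$. Define $Y$ as the homotopy pullback
$$
Y \; := \; T \;\times_{\; \prod_{p\in\cP}\prod_i K(\Z_p,2d_i) \;}\; \prod_{p\in\cP}BX_p,
$$
where the map out of $T$ is the product of the $\F_p$-completions and the map out of $\prod_p BX_p$ is $\prod_p\phi_p$. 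Using the arithmetic square presentation of $T$ and the fact that each $\phi_p$ is a rational equivalence, one checks that $Y\pcom\simeq BX_p$ for every $p\in\cP$ (the collapses $(K(\Z_q,n))\pcom\simeq *$ and $(BX_q)\pcom\simeq *$ for $q\neq p$ reduce the $p$-completion of the pullback to $BX_p$) and $Y_\Q\simeq T_\Q\simeq\prod_i K(\Q,2d_i)$ (since $\prod_p\phi_p$ is a rational equivalence). In particular $Y$ is simply connected and of finite type.

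The projection $Y\to T$ pulls the tautological generators of $H^*(T;\Z[J^{-1}])$ back to classes $x_i\in H^{2d_i}(Y;\Z[J^{-1}])$. Commutativity of the pullback square identifies, under the isomorphism $H^*(Y;\F_p)\cong H^*(BX_p;\F_p)$, the mod-$p$ reduction of $x_i$ with the polynomial generator $y_{i,p}\bmod p$ of $H^*(BX_p;\F_p)$ for each $p\in\cP$; similarly rationally $x_i$ reduces to the tautological generator of $\prod_i K(\Q,2d_i)$. Consequently the $\Z[J^{-1}]$-algebra map
$$
\Psi \co \Z[J^{-1}][x_1,\ldots,x_r] \longrightarrow H^*(Y;\Z[J^{-1}])
$$
becomes an isomorphism after tensoring with any residue field of $\Z[J^{-1}]$, namely $\Q$ and the $\F_p$ for $p\in\cP$. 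Proposition~\ref{mod-p-red}, applied with $R=\Z[J^{-1}]$, and the matching Poincar\'e series at each prime and rationally ensure that both sides of $\Psi$ are finitely generated $\Z[J^{-1}]$-modules in each degree; a Nakayama-style argument across the residue fields of $\Z[J^{-1}]$ then forces the kernel and cokernel of $\Psi$ to vanish. Hence $H^*(Y;\Z[J^{-1}])\cong\Z[J^{-1}][x_1,\ldots,x_r]$ is polynomial of the required type. The final statement for an arbitrary commutative ring $R$ with $R=R[J^{-1}]$ follows by the universal coefficient theorem applied to $\Z[J^{-1}]\to R$, since $H^*(Y;\Z[J^{-1}])$ is free in each degree.

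The main obstacle I anticipate is the finite-type/universal-coefficient control in the last step: ensuring that both sides of $\Psi$ are finitely generated $\Z[J^{-1}]$-modules degreewise, so that vanishing on every residue field of $\Z[J^{-1}]$ genuinely forces $\Psi$ itself to be an isomorphism.
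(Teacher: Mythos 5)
Your overall strategy (Zabrodsky mixing of the $p$--complete pieces against an arithmetic leg) is the same as the paper's, but your fracture square is set up with the wrong integral model, and this breaks the finite type claim. You glue $\prod_{p}BX_p$ against $T=\prod_iK(\Z[J^{-1}],2d_i)$ over $\prod_{p\in\cP}\prod_iK(\Z_p,2d_i)$; a Mayer--Vietoris computation in homotopy then shows that $\pi_{2d_i}(Y)$ contains a finite--index copy of $\Z[J^{-1}]^{m_i}$ (with $m_i$ the multiplicity of $2d_i$), which is not a finitely generated abelian group whenever $J$ is infinite --- the typical case, e.g.\ already for $R=\F_p$ or $R=\Z_{(p)}$. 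So your $Y$ is not of finite type, which is part of the assertion being proved (and is what the paper's Remark following Theorem~\ref{mainthm} relies on). The paper instead takes the arithmetic leg to be $K=\prod_iK(\Z[\cP^{-1}],2d_i)$, inverting the primes \emph{in} $\cP$, and glues over the \emph{rationalization} $(\prod_{p\in\cP}B_p)_\Q$; the two integrality conditions (at $J$ coming from $K$, at $\cP$ coming from $\prod_pB_p$) then intersect to give $\pi_{2d_i}(Y)\cong\Z^{m_i}\oplus(\text{finite})$, and finite type follows from Wall's theorem. Constructing the map $K\to(\prod_pB_p)_\Q$ requires knowing that the target is a product of Eilenberg--Mac\,Lane spaces, which is where the paper uses \cite[Thm.~2.1]{ABGP03} and obstruction theory.

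Two further steps you assert need real input. First, the claim $Y_\Q\simeq T_\Q$ ``since $\prod_p\phi_p$ is a rational equivalence'': the relevant fiber is the infinite product of the fibers of the $\phi_p$, and an infinite product of spaces with finite homotopy groups need not be rationally trivial (already $\prod_p\Z/p$ is not a torsion group). One must show that for each fixed $n$ the $n$--th homotopy group of the fiber of $\phi_p$ vanishes for all but finitely many $p\in\cP$; this is exactly where the paper invokes the identification of $X_p$ with $(\prod_iS^{2d_i-1})\pcom$ for large $p$ together with Serre's finiteness theorem. (Relatedly, $\phi_p$ is not literally an isomorphism on $\Q_p$--cohomology: $H^{2d}(K(\Z_p,2d);\Q_p)\cong\Hom_{\Z}(\Z_p,\Q_p)$ is an enormous $\Q_p$--vector space; the correct statement is an isomorphism on $\pi_*\otimes\Q$.) Second, the degreewise finiteness needed for your Nakayama argument --- which you flag yourself as the main obstacle --- cannot come from Proposition~\ref{mod-p-red}, since that proposition \emph{assumes} $H^*(Y;R)$ is degreewise finite free. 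The paper's route is: finite type of $Y$ gives $H_*(Y;\Z[J^{-1}])$ degreewise finitely generated, the universal coefficient theorem together with $p$--torsion--freeness makes it degreewise finite free, and the purely algebraic Lemma~\ref{poly-at-each-p} then replaces your residue--field argument. As written, the finite type assertion fails for your $Y$ and these two finiteness inputs are missing.
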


Before the proof we need the following lemma, which is similar to a
lemma used by Baker--Richter \cite[Prop.~2.4]{BR06} in a different
context.

\begin{lemma} \label{poly-at-each-p}
Let $J$ be a set of prime numbers and set $R = \Z[J^{-1}]$. Suppose
that $A$ is a graded connected $R$--algebra, finite free over
$R$ in each degree. If $A \otimes_R \Q$ is a graded polynomial $\Q$--algebra and
$A \otimes_R \Z_p$ is a graded polynomial $\Z_p$--algebra for all primes $p \not
\in J$, then $A$ is a graded polynomial $R$--algebra as well (with the
same type as both $A \otimes_R \Z_p$, $p\not\in J$, and $A \otimes_R \Q$).
\end{lemma}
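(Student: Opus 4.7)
The plan is to produce polynomial generators for $A$ by lifting an $R$-basis of the indecomposables module $Q(A) = A^+/(A^+)^2$, and then to verify algebraic independence by comparing with the polynomial structures over the various $\Z_p$ and over $\Q$. The main difficulty is controlling the $R$-module structure of $Q(A)$ itself, since our hypotheses only describe it after base change.

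First I would note that, since $A$ is finite free over the Noetherian ring $R$ in each degree, each $Q(A)_n$ is finitely generated over $R$. The indecomposables functor commutes with flat base change, so $Q(A) \otimes_R \Z_p \cong Q(A \otimes_R \Z_p)$ and $Q(A) \otimes_R \Q \cong Q(A \otimes_R \Q)$ are free, with degrees and ranks matching the common type $\{2d_1,\ldots,2d_r\}$ (the ranks are independent of $p$ because both further base-change to the same $\Q_p$-module). The key structural step is then to upgrade \emph{free after each flat base change} to \emph{free over $R$}. For this I would exploit that $R = \Z[J^{-1}]$ is a principal ideal domain whose nonzero prime ideals are exactly $pR$ for $p \notin J$: by the structure theorem, every finitely generated $R$-module is a direct sum of a free part and cyclic torsion pieces $R/p^eR$, and any nonzero such torsion summand would survive intact under $-\otimes_R \Z_p$, contradicting freeness of $Q(A) \otimes_R \Z_p$. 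Hence $Q(A)_n$ is free, of rank equal to the multiplicity of $n$ in $\{2d_1,\ldots,2d_r\}$.

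With this freeness in hand, I would lift a homogeneous $R$-basis of $Q(A)$ to obtain elements $x_1,\ldots,x_r \in A$. A standard graded Nakayama induction, using that $A$ is graded connected with $A_0 = R$, shows the $x_i$ generate $A$ as an $R$-algebra, giving a surjective graded $R$-algebra map $\phi\co R[X_1,\ldots,X_r] \to A$ with $|X_i| = |x_i|$. After base change to $\Z_p$, $\phi$ becomes a surjection of graded polynomial $\Z_p$-algebras of the same type in which the images of the $X_i$ form a $\Z_p$-basis of indecomposables; by the argument of Lemma~\ref{poly-check} (surjection between modules of identical finite rank in each degree) this is an isomorphism, and likewise $\phi \otimes_R \Q$ is an isomorphism.

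Finally, I would show $K = \ker(\phi) = 0$. In each degree $K_n$ is a submodule of the finite free $R$-module $R[X_1,\ldots,X_r]_n$, and hence is itself finite free over the PID $R$. Since $K \otimes_R \Q = 0$ by the previous step, its rank must be zero, so $K_n = 0$. Therefore $\phi$ is an isomorphism and $A$ is a graded polynomial $R$-algebra of the asserted type.
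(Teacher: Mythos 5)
Your proposal is correct and follows essentially the same route as the paper: identify $Q(A)\otimes_R\Z_p\cong Q(A\otimes_R\Z_p)$ and $Q(A)\otimes_R\Q\cong Q(A\otimes_R\Q)$ via flat base change, use the structure theorem over the PID $\Z[J^{-1}]$ to conclude $Q(A)$ is finite free, lift a basis to algebra generators, and deduce algebraic independence from the rational polynomial structure (your kernel argument for $\phi$ is just a slightly more explicit phrasing of this, and the intermediate check that $\phi\otimes_R\Z_p$ is an isomorphism is harmless but not needed).
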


\begin{proof} 
The proof is similar to the proof of Lemma~\ref{poly-check}. Since $\Z_p$ is flat
over $R$ for $p\not\in J$, we have $Q(A) \otimes_R \Z_p
\xrightarrow{\cong} Q(A \otimes_R \Z_p)$, which is free over $\Z_p$ in
each degree by assumption. Hence $Q(A)$ does not have $p$--torsion when
$p\not\in J$, and since $Q(A)$ is finite over $R$ in each degree, we conclude by
the structure theorem for finite $\Z[J^{-1}]$--modules that $Q(A)$
is finite free over $R$ in each degree. Likewise, since $\Q$ is flat
over $R$, $Q(A) \otimes_R \Q \xrightarrow{\cong} Q(A \otimes_R
\Q)$. Pick homogeneous elements in $A$ which project to an $R$--basis
of $Q(A)$. Clearly these elements generate $A$ as an
$R$--algebra. They are also algebraically
independent, since they, via the isomorphism $Q(A) \otimes_R \Q
\xrightarrow{\cong} Q(A \otimes_R \Q)$, give rise to generators of the
graded polynomial ring $A \otimes_R \Q$. Hence $A$ is a graded polynomial
$R$--algebra, with the same type as $A \otimes_R \Q$ and
$A \otimes_R \Z_p$, $p\not\in J$.
\end{proof}

\begin{proof}[Proof of Proposition~\ref{glueing}]
The proof is reminiscent of \cite[Pf.~of~Lem.~1.2]{ABGP03}, to which we
also refer. By Proposition~\ref{polylemma}, $(B_p)\pcom$ is
$\F_p$--complete with the same $\F_p$--cohomology as $B_p$, so we may
assume that $B_p$ is $\F_p$--complete. Set $K = K(\Z[\cP^{-1}],2d_1) \times \cdots \times
K(\Z[\cP^{-1}],2d_r)$. We will define $Y$ as a homotopy pull-back
\begin{equation} \label{pullbackdiag}
\xymatrix{
Y \ar[r] \ar[d] & {\prod_{p\in \cP} B_p} \ar[d] \\
K \ar[r]^-f & (\prod_{p \in \cP} B_p)_\Q
}
\end{equation}
where the map $f$ has to be constructed.

Since $B_p$ is the classifying space of a connected $p$--compact group by
Proposition~\ref{polylemma}, $\pi_*(B_p)$ and $H^*(B_p;\Z_p)$ are
finite over $\Z_p$ in each degree. It follows that $H^*(B_p;\Z_p)$ is also
a polynomial algebra over $\Z_p$, with type $\{2d_1,\ldots,2d_r\}$.
By an essentially classical result (see \cite[Thm.~2.1]{ABGP03})
$$
\pi_n(B_p) \cong \pi_{n-1}\left((S^{2d_1-1}
\times \cdots \times S^{2d_r-1})\pcom\right)
$$
for
$p>\max\{d_1,\cdots,d_r\}$ and similarly for the $\Q$--localization, so
$$
\pi_n\Bigl(\bigl(\prod_{p \in \cP} B_p\bigr)_\Q\Bigr) \cong
\pi_{n-1}\left(S^{2d_1-1} \times
\cdots \times S^{2d_r-1}\right) \otimes
\left(\prod_{p\in\cP}\Z_p\right) \otimes \Q
$$
Since these homotopy groups are concentrated in even degrees,
$\left(\prod_{p \in \cP} B_p\right)_\Q$ is a product of
Eilenberg--Mac\,Lane spaces, as follows from standard obstruction
theory \cite[Ch.~IX]{whitehead78}.

Hence we can define $f\co K \to (\prod_{p \in \cP} B_p)_\Q$ to be the
canonical map induced by the unique ring map $\Z[\cP^{-1}] \to \Q \to
(\prod_{p\in\cP}\Z_p)\otimes\Q$.
By the Mayer--Vietoris sequence in homotopy applied to the pull-back
square \eqref{pullbackdiag} we see that 
$$
\pi_n(Y) = \left(\bigoplus_{i, 2d_i=n} \Z\right) \oplus
\left(\bigoplus_{p \in \cP} \Tor\left(\Q/\Z,\pi_n(B_p)\right)\right)
$$
In particular $\pi_n(Y)$ is finite over $\Z$, since for
a fixed $n$, $\pi_n(B_p)$ is $p$--torsion free, for $p$ large
\cite[Ch.~V, \S5, Prop.~4]{serre51}. Since $Y$ is simply connected
this implies that $Y$ is homotopy equivalent to a CW--complex with
finitely many cells in each dimension, i.e., $Y$ can be chosen to have
finite type (cf.\ e.g.\ \cite[Thm.~A]{wall65}).

For $p \in \cP$, the pull-back square above shows that the map $Y
\to B_p$ induces an isomorphism on mod $p$ homology so
$H^*(Y;\Z_p) \xleftarrow{\cong} H^*(B_p;\Z_p)$. Hence $H^*(Y;\Z_p)$
is a polynomial ring over $\Z_p$ with the same type as
$H^*(B_p;\F_p)$. Since
$H_*(Y;\Z[J^{-1}])$ is finite over $\Z[J^{-1}]$ in each
degree, the universal coefficient theorem
\cite[3.6.5]{weibel94} (applied to the $\Z[J^{-1}]$--module
$\Z_p$) shows that $H_*(Y;\Z[J^{-1}])$ is in fact finite free over
$\Z[J^{-1}]$ in each degree, and hence so is $H^*(Y;\Z[J^{-1}])$. In particular
$$
H^*(Y;\Z[J^{-1}]) \otimes_{\Z[J^{-1}]} \Z_p \cong H^*(Y;\Z_p) \cong H^*(B_p;\Z_p).
$$
Lemma~\ref{poly-at-each-p} now shows that $H^*(Y;\Z[J^{-1}])$ is a
polynomial $\Z[J^{-1}]$--algebra as wanted. Finally, let $R$ be a
commutative ring with $R=R[J^{-1}]$. Since $H_*(Y;\Z[J^{-1}])$
is finite free over $\Z[J^{-1}]$ in each degree, the universal
coefficient theorem again implies that $H^*(Y;R) \cong
H^*(Y;\Z[J^{-1}]) \otimes_{\Z[J^{-1}]} R$, proving the last
claim.
\end{proof}

\begin{rem} \label{non-uniqueness}
The space $Y$ in Proposition~\ref{glueing} is rarely unique, even up
to $\Z[J^{-1}]$--localization, due to a multitude of factors: First,
given a multiset of degrees and a prime $p$, there may be several
choices of $p$--compact groups with the same degrees;  this is a
finite, essentially combinatorial, problem (see
\cite[Thm.~1.5]{AG08classification} and \cite[Cor.~1.8]{notbohm99}).

Secondly, the classifying space $BX$ of a connected $p$--compact group
may lift to many $\Z_{(p)}$--homotopy types of spaces of finite type
over $\Z_{(p)}$, with examples at least going back to Bousfield and
Mislin \cite[Appendix]{NS90II}. This uses a variant of Wilkerson's
double coset formula \cite[Proof~of~Thm.~3.8]{wilkerson76} resulting
from the pull-back $K \to (BX)_\Q \leftarrow BX$, for $K$ a rational
space with the same rational degrees as $BX$. In fact a more careful
analysis along the same lines reveals that $BX$ has a unique lifting
as above if and only if $X$ is a $p$--compact torus or $X$ is a product
of rank one $p$--compact groups and the rational degrees
$\{2d_1,\ldots,2d_r\}$ has the property that none of the degrees can
be written as a non-negative integral linear combination of the
others. Moreover, if the lift is non-unique there are uncountably many lifts.

Thirdly, given a set of $\Z_{(p)}$--homotopy types $Y_p$ as in the
second step, one for each prime in $\cP$ (i.e., $p \notin J$), with
the same rational degrees, Zabrodsky mixing associated to the
pull-back $K \to (\prod_{p \in \cP} Y_p)_\Q \leftarrow \prod_{p \in
  \cP}Y_p$ in general produces infinitely many $\Z[J^{-1}]$--homotopy
types. Again a more careful analysis with the corresponding double
coset formula reveals that if $2 \leq |\cP| < \infty$, then there are
countably infinitely many lifts, unless $(Y_p)\pcom$ satisfies the
conditions of the second step for each $p \in \cP$, in which case
there is only one lift. If $\cP$ is infinite then there are
uncountably many lifts unless the rational degrees are
$\{2, 2, \ldots, 2\}$, i.e., the torus case. (See also e.g.,
 \cite{rector71,wilkerson76,NS90II,moller92,mcgibbon94} for related
 information.)
\end{rem}


\section{$p$--compact groups with polynomial $\F_p$--cohomology
  concentrated in even degrees} \label{pcg-section}

In this section we provide the necessary results on $p$--compact
groups, and use this, together with the results of the previous
sections, to prove Theorem~\ref{mainthm} and its corollaries.
The key step is the following result, which in fact also holds
without the even degree assumption
\cite[Thm.~1.4]{AG08classification}, though we do not know a proof in
that generality
which does not go via the classification of $2$--compact groups
\cite[Thm.~1.2]{AG08classification}.

\begin{prop} \label{polyprop}
If $X$ is a $p$--compact group such that $H^*(BX;\F_p)$ is a
finitely generated polynomial algebra over $\F_p$ concentrated in even
degrees, then we can write
$$
H^*(BX;\F_p) \cong H^*(BG;\F_p) \otimes H^*(BY;\F_p)
$$
as $\F_p$--algebras (and in fact as algebras over the Steenrod
algebra), for $G$ a compact connected Lie group and $Y$ a product of
exotic $p$--compact groups.
\end{prop}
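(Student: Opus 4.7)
The plan is to decompose the Weyl group of $X$ into irreducible $\Q_p$--reflection groups, realize each factor separately by either a compact Lie group or an exotic $p$--compact group, and then tensor together. Since $H^*(BX;\F_p)$ is polynomial concentrated in even degrees we have $H^1(BX;\F_p)=0$, so $X$ is connected by Proposition~\ref{polylemma}. By the standard fact recalled in the introduction, the hypothesis on $X$ is equivalent to $H^*(X;\Z_p)$ being torsion free and $H^*(BX;\Z_p) \xrightarrow{\cong} H^*(BT;\Z_p)^{W_X}$, where the Weyl group $W_X$ acts as a finite $\Q_p$--reflection group on $V = \pi_2(BT) \otimes \Q$ and preserves the lattice $L = \pi_2(BT)$, with polynomial invariants over $\Z_p$.

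First I would decompose $V = V_1 \oplus \cdots \oplus V_k$ into $W_X$--irreducible summands by Shephard--Todd--Chevalley, writing $W_X = W_1 \times \cdots \times W_k$ correspondingly, and promote this rational splitting to a compatible lattice decomposition using the root datum machinery from \cite{dw:center,AG08classification}. This is where I expect the main obstacle. Concretely, one must show that, possibly after passing to an isogenous torus (which does not affect the $\F_p$--cohomology, since $H^*(BX;\Z_p)$ is torsion free), $L$ splits as $L_1 \oplus \cdots \oplus L_k$ with $W_i$ acting on $L_i$ and each $\Z_p[L_i]^{W_i}$ polynomial. The polynomial invariants hypothesis is what forces this integrality, since factoring the indecomposables of $\Z_p[L]^{W_X}$ according to the product action must correspond to a genuine decomposition of the underlying root datum.

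Next, for each $i$ I would invoke the Clark--Ewing classification \cite{CE74,DMW92} of irreducible finite $\Q_p$--reflection groups. If $W_i$ is non-exotic, i.e., its character field equals $\Q$, then $W_i$ is the Weyl group of a compact connected simple Lie group, and the assumption that $\Z_p[L_i]^{W_i}$ is a polynomial algebra forces that Lie group to be $p$--torsion free by Borel's classical theorem (see Proposition~\ref{lieprop}); take this as $G_i$. If $W_i$ is exotic, then classical existence constructions for exotic $p$--compact groups, going back to Sullivan, Clark--Ewing, Quillen, and refined by Notbohm and others, furnish an exotic $p$--compact group $X_i$ realizing $\Z_p[L_i]^{W_i}$ as its cohomology.

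Finally, setting $G = \prod G_i$ over the non-exotic indices and $Y = \prod X_i$ over the exotic indices, the required isomorphism
$$
H^*(BX;\F_p) \cong H^*(BG;\F_p) \otimes H^*(BY;\F_p)
$$
follows by tensoring the factor isomorphisms $\Z_p[L_i]^{W_i} \cong H^*(BG_i;\Z_p)$ or $H^*(BX_i;\Z_p)$ and reducing mod $p$. Compatibility with the Steenrod algebra is automatic, since in each case the cohomology is identified with the corresponding torus invariants inside $H^*(BT_i;\F_p)$, and the Steenrod operations on a polynomial ring in even degrees are determined by their action on the degree $2$ generators of the ambient torus cohomology.
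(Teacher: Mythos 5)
Your overall strategy (split off the exotic part of the Weyl group/root datum and realize the rational part by a Lie group) is the same as the paper's, but there is a genuine gap at the decisive step. You assert that for a non-exotic irreducible factor $W_i$, ``the assumption that $\Z_p[L_i]^{W_i}$ is a polynomial algebra forces that Lie group to be $p$--torsion free by Borel's classical theorem (see Proposition~\ref{lieprop}).'' Borel's theorem does not say this: it characterizes the primes for which $H^*(BG;\F_p)$ is a polynomial algebra in even degrees (equivalently, for which $G$ is $p$--torsion free), and only for $p$--torsion free $G$ does one get $H^*(BG;\Z_p)\cong H^*(BT;\Z_p)^{W}$. What you actually know from the hypothesis on $X$ is that the abstract invariant ring $\Z_p[L_i]^{W_i}$ is polynomial, and you need to conclude that the compact Lie group with that root datum has no $p$--torsion. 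That implication is not formal and is precisely the hard point; the closest classical substitute is Demazure's theorem relating torsion primes to non-polynomiality of integral Weyl group invariants, which is a substantial case-by-case result you neither cite nor prove, and which the paper deliberately avoids. The paper instead closes this gap by a different mechanism: Lannes' $T$--functor shows that every centralizer of an elementary abelian $p$--subgroup of $X'$ is again a $p$--compact group with even polynomial cohomology, hence connected (Lemma~\ref{conn}); for toral subgroups the component group of the centralizer is computed from the root datum \cite[Thm.~7.6]{dw:center} and so the same connectedness holds in $G\pcom$ and in $G$; and then Steinberg's and Borel's theorems convert connectedness of these centralizers into $p$--torsion freeness of $G$, after which \cite[Thm.~12.1]{AGMV08} gives $H^*(BX';\Z_p)\cong H^*(BT;\Z_p)^W\cong H^*(BG;\Z_p)$.

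A secondary problem is your treatment of the splitting itself. Decomposing $V=\pi_2(BT)\otimes\Q$ into irreducibles and then ``passing to an isogenous torus, which does not affect the $\F_p$--cohomology'' is not legitimate: isogenous lattices can have genuinely different $\Z_p$--invariant rings (compare $\SU(p)$ and its adjoint form), and in any case the $p$--compact group $X$ comes with a fixed lattice that you are not free to change. The paper gets the splitting honestly from the classification of $\Z_p$--root data \cite[Thm.~8.1]{AG08classification} (which produces the Lie-versus-exotic product decomposition at the integral level, with the Lie factor not necessarily a product of simply connected simple groups) together with the Dwyer--Wilkerson product splitting theorem \cite[Thm.~1.4]{dw:split}, which splits $BX$ itself, not just its cohomology. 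You should replace both the isogeny step and the appeal to ``Borel's classical theorem'' with arguments of this kind, or supply a proof of the invariant-theoretic implication you are using.
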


We first need a lemma.

\begin{lemma} \label{conn}
Let $X$ be a $p$--compact group such that $H^*(BX;\F_p)$ is a
finitely generated polynomial algebra over $\F_p$ concentrated in even
degrees. Then $\cC_X(\nu)$ is connected for any elementary abelian
$p$--subgroup $\nu\co BE \to BX$ of $X$.
\end{lemma}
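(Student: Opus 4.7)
The centralizer $\cC_X(\nu)$ is itself a $p$-compact group by the general theory of centralizers in the $p$-compact setting \cite{DW94}, with classifying space $\operatorname{map}(BE,BX)_{B\nu}$. Since $\pi_0$ of a $p$-compact group is a finite $p$-group (because $BX$ is $\F_p$-complete and has finite $\F_p$-cohomology), $\cC_X(\nu)$ is connected if and only if $H^1(B\cC_X(\nu);\F_p)=0$. That vanishing is therefore what I plan to establish.

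My strategy is to reduce to the toral case and invoke Steinberg's fixed-point theorem. By the standard equivalences recalled in the introduction, the hypothesis on $X$ is equivalent to $H^*(X;\Z_p)$ being torsion free, which is the $p$-compact analog of Borel's classical torsion-free condition. A standard consequence, due to Dwyer--Wilkerson and subsequent work, is that every elementary abelian $p$-subgroup of such an $X$ is toral; so, up to conjugation, I may assume that $\nu$ factors as $E\to T\hookrightarrow X$. For such a toral $\nu$, the usual centralizer computation for $p$-compact groups identifies $T$ as a maximal torus of $\cC_X(\nu)$ and the pointwise stabilizer $C_{W_X}(E)$ of $E$ under the $W_X$-action on $T$ as its Weyl group. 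Because $W_X$ acts as a finite $\Q_p$-reflection group on $\pi_2(BT)\otimes\Q_p$, Steinberg's fixed-point theorem shows that $C_{W_X}(E)$ is itself generated by the reflections it contains. Since the Weyl group of the identity component of any $p$-compact group is precisely the subgroup of its Weyl group generated by reflections, this forces $\pi_0(\cC_X(\nu))$ to be trivial, giving the desired connectedness.

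The main obstacle is the torality step: the $p$-compact analog of Borel's theorem, that every elementary abelian $p$-subgroup of a torsion-free $X$ is conjugate into the maximal torus, requires some machinery beyond the equivalences quoted in the introduction. If that input is awkward in context, an alternative route is via Lannes' $T$-functor: one identifies $H^*(B\cC_X(\nu);\F_p)$ with a factor of $T_E H^*(BX;\F_p)$ and then shows, by a direct unstable-module computation, that $T_E$ sends a polynomial unstable algebra concentrated in even degrees to a product of such algebras. This would give the stronger statement that $H^*(B\cC_X(\nu);\F_p)$ is itself polynomial in even degrees, immediately yielding $H^1=0$ and hence the lemma.
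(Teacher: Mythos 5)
Your primary route breaks down at the Steinberg step, and the failure is concrete. The subgroup $E$ lies in the discrete torus $\dT\cong L\otimes\Z/p^\infty$ (its image is a subgroup of $\frac{1}{p}L/L$), \emph{not} in the reflection representation $V=L\otimes\Q_p$, so the characteristic-zero fixed-point theorem (pointwise stabilizers of subsets of $V$ are generated by reflections) does not apply. Stabilizers of torsion points of the torus are the subject of Steinberg's \emph{Torsion in reductive groups} \cite{steinberg75}, and they are \emph{not} generated by reflections in general --- that failure is exactly the torsion-prime phenomenon, and ruling it out requires the polynomiality hypothesis, which your argument never invokes after the torality reduction. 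Worse, even when the pointwise stabilizer $W_\nu$ \emph{is} generated by reflections as an abstract group, $\cC_X(\nu)$ need not be connected: take $X$ the $2$--completion of $SO(3)$ and $\nu$ the $2$--torsion of the maximal torus; then $W_\nu=W=\Z/2$ is generated by a reflection, yet $\cC_X(\nu)$ is the $2$--completion of $O(2)$, which is disconnected. (Since $H^*(BSO(3);\F_2)\cong\F_2[w_2,w_3]$ has an odd-degree generator, the lemma itself is not contradicted --- but your post-torality argument uses nothing about $X$ beyond connectedness, so it would ``prove'' this false statement.) The correct criterion \cite[Thm.~7.6]{dw:center} identifies $\pi_0(\cC_X(\nu))$ with the quotient of $W_\nu$ by the subgroup generated by only those reflections whose singular set contains $\im\nu$, and deducing triviality of this quotient from polynomiality of the invariants is precisely the Steinberg-type theorem you would need to import. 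The torality reduction is likewise a genuine theorem requiring proof, as you acknowledge.

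Your fallback sketch in the last paragraph is in fact the paper's proof: $H^*(B\cC_X(\nu);\F_p)\cong T_{E,\nu^*}\bigl(H^*(BX;\F_p)\bigr)$ by \cite[Pf.~of~Thm.~8.1]{dw:center}, and this is again a finitely generated polynomial algebra concentrated in even degrees because $T_{E,\nu^*}$ preserves evenly graded objects \cite[Prop.~2.1.3]{lannes92} and finitely generated polynomial algebras \cite[Thm.~1.2]{DW98}; connectedness then follows from Proposition~\ref{polylemma}. Be aware, however, that the second preservation statement is not ``a direct unstable-module computation'': it is the main theorem of \cite{DW98} and is itself a nontrivial generalization of Steinberg's theorem --- indeed it is the same input your first route is missing, repackaged. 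So the $T$--functor argument is the right one to write down, but you must cite Dwyer--Wilkerson for the key step rather than expect to verify it by hand.
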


\begin{proof}
Let $\nu\co BE \to BX$ be an elementary abelian $p$--subgroup of $X$. By
\cite[Pf.~of~Thm.~8.1]{dw:center} we have $H^*(B\cC_X(\nu);\F_p) \cong
T_{E,\nu^*}(H^*(BX;\F_p))$, where $T_{E,\nu^*}$ denotes the component
of Lannes' $T$--functor corresponding to $\nu^*$
\cite[2.5.2]{lannes92}. Since $T_{E,\nu^*}$ preserves objects
concentrated in even degrees \cite[Prop.~2.1.3]{lannes92} as well as
finitely generated graded polynomial algebras \cite[Thm.~1.2]{DW98}, we
conclude that $H^*(B\cC_X(\nu);\F_p)$ is also a finitely generated
polynomial algebra over $\F_p$ concentrated in even degrees and in
particular $\cC_X(\nu)$ is connected by Proposition~\ref{polylemma}.
\end{proof}

\begin{proof}[{Proof of Proposition~\ref{polyprop}}]
By Proposition~\ref{polylemma}, $X$ is connected. The
classification of $\Z_p$--root data \cite[Thm.~8.1]{AG08classification}
says that the $\Z_p$--root datum $\D_X$ of
$X$ can be written as a direct product $\D_X \cong (\D_G \otimes \Z_p) \times
\D_1 \times \ldots \times \D_n$, where $G$ is a compact connected Lie
group and the $\D_i$ are exotic $\Z_p$--root data. By the product
decomposition theorem \cite[Thm.~1.4]{dw:split} we have
an associated decomposition $BX \simeq BX' \times BX_1 \times \ldots
\times BX_n$, where $X'$ is a connected $p$--compact group with
$\D_{X'} \cong \D_G \otimes \Z_p$ and the $X_i$ are exotic $p$--compact
groups with $\D_{X_i} \cong \D_i$. 

We finish the proof by showing that $H^*(BX';\F_p) \cong
H^*(BG;\F_p)$ as $\F_p$--algebras. Lemma~\ref{conn} shows that
$\cC_{X'}(\nu)$ is connected for every elementary abelian $p$--subgroup
$\nu\co BE \to BX'$ of $X'$. If $\nu$ is {\em toral}, i.e., if $\nu$
factors through the maximal torus $BT \to BX'$ of $X'$, a result of
Dwyer--Wilkerson \cite[Thm.~7.6]{dw:center} (see also
\cite[Prop.~8.4(3)]{AG08classification}) shows that $\pi_0(\cC_{X'}(\nu))$ can be
computed from the root datum $\D_{X'}$. Since $\D_{X'} \cong \D_{G\pcom}$ it
follows that $\cC_{G\pcom}(\nu)$ is also connected for every toral
elementary abelian $p$--subgroup $\nu$ of $G\pcom$. If $E$ is an elementary
abelian subgroup of $G$, and $i\co E \to G$ is the inclusion we have
$C_G(E)\pcom \cong \cC_{G\pcom}(Bi)$, so $C_G(E)$ is connected for
every elementary abelian $p$--subgroup $E$ of $G$ which is contained in a
maximal torus. By results of Steinberg
\cite[Thms.~2.27~and~2.28]{steinberg75} and Borel
\cite[Thm.~B]{borel61} it follows that $G$ does not have $p$--torsion,
or equivalently that $H^*(BG;\F_p)$ is a polynomial algebra concentrated in
even degrees.
But, by \cite[Thm.~12.1]{AGMV08}, this implies that we have isomorphisms
$$
H^*(BX';\Z_p) \xrightarrow{\cong} H^*(BT;\Z_p)^W \xleftarrow{\cong}
H^*(BG;\Z_p)
$$
since the Weyl group $W$ and its action of $T$ is determined by
$\D_{X'} \cong \D_{G\pcom}$. In particular $H^*(BX';\F_p) \cong
H^*(BG;\F_p)$ as algebras over the Steenrod algebra, as wanted.
\end{proof}

The following proposition determining the compact Lie groups whose
classifying space has polynomial cohomology basically goes back to
Borel \cite[Thm.~2.5]{borel61}.

\begin{prop} \label{lieprop}
A finitely generated graded polynomial $\F_p$--algebra $A$ concentrated in even
degrees is isomorphic to $H^*(BG;\F_p)$ for some compact connected Lie
group $G$ if and only if the type of $A$ is union of multisets of
degrees occurring in Table~\ref{lietable}.
\end{prop}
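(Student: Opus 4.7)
The plan is to prove the two directions separately. For the ``if'' direction, I would invoke Borel's calculations \cite[Thm.~2.5]{borel61}: each entry of Table~\ref{lietable} records a simple simply connected compact Lie group $G$ (or $S^1$) together with the primes $p$ at which $H^*(G;\Z)$ is $p$-torsion free, and for such $(G,p)$ the ring $H^*(BG;\F_p)$ is a polynomial algebra with the listed generator degrees; taking tensor products via direct products of groups then realizes any union of these multisets as the $\F_p$-cohomology of the classifying space of a compact connected Lie group.

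For the ``only if'' direction, suppose $H^*(BG;\F_p)$ is polynomial and concentrated in even degrees. By Borel's theorem \cite[Thm.~B]{borel61} this forces $G$ to be $p$-torsion free, so \cite[Thm.~12.1]{AGMV08} gives $H^*(BG;\Z_p) \cong H^*(BT;\Z_p)^{W_G}$, and the mod $p$ cohomology is determined entirely by the $\Z_p$-root datum $\D_G \otimes \Z_p$. The next step is to decompose this root datum: it splits as a direct product of a trivial piece (coming from the identity component of the center of $G$) and irreducible factors $\D_1,\ldots,\D_k$, each necessarily of Lie type since $G$ is a compact Lie group. By the product decomposition theorem \cite[Thm.~1.4]{dw:split} (applied just as in the proof of Proposition~\ref{polyprop}), this induces a splitting
$$
BG\pcom \simeq BT'\pcom \times BG_1\pcom \times \cdots \times BG_k\pcom,
$$
where $T'$ is a torus and each $G_i$ is a simple simply connected compact Lie group with $\D_{G_i} \otimes \Z_p \cong \D_i$. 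Consequently $H^*(BG;\F_p)$ splits as a tensor product of the $H^*(BG_i;\F_p)$'s (and a polynomial piece from the torus).

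Since any tensor factor of a polynomial algebra is again polynomial, each $H^*(BG_i;\F_p)$ is polynomial in even degrees, hence each $G_i$ is $p$-torsion free. The argument then concludes with a finite case check: for each simple type $A_n, B_n, C_n, D_n, G_2, F_4, E_6, E_7, E_8$, Borel determined the primes at which $G_i$ is $p$-torsion free together with the corresponding polynomial generator degrees of $H^*(BG_i;\F_p)$ \cite[\S9, \S10]{borel61}; matching this against Table~\ref{lietable} (keeping in mind that $\Spin(2n+1)$ has been omitted for odd $p$ because its $\F_p$-cohomology then agrees with that of $\Sp(n)$) completes the classification. The main obstacle is essentially organizational, namely ensuring that the root-datum decomposition faithfully reflects the tensor-factor decomposition of $H^*(BG;\F_p)$, and that Borel's list of torsion-free primes coincides with the ``Occur for'' column of the table.
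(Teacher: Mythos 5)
Your ``if'' direction and the concluding case check against Borel's tables are the same as in the paper; the problem is in the ``only if'' direction, at the step where you decompose $\D_G\otimes\Z_p$ as a product of a trivial (torus) piece and irreducible factors realized by simple simply connected groups and then invoke the product decomposition theorem to split $BG\pcom \simeq BT'\pcom\times\prod_i BG_i\pcom$. That splitting does not exist in general. The standard counterexample is $G=U(n)$ at a prime $p\mid n$: here $H^*(BU(n);\F_p)\cong\F_p[c_1,\dots,c_n]$ is polynomial in even degrees, but the permutation lattice $\Z_p^n$ does not split off the reduced standard $\Sigma_n$--sublattice as a direct summand when $p\mid n$, so $\D_{U(n)}\otimes\Z_p\not\cong(\D_{\SU(n)}\otimes\Z_p)\times(\D_{S^1}\otimes\Z_p)$ and $BU(n)\pcom\not\simeq B\SU(n)\pcom\times BS^1\pcom$, even though the cohomology ring does factor as required by the statement. (A related inaccuracy: an irreducible $\Z_p$--root datum of Lie type need not be that of a \emph{simply connected} simple group; central quotients give further irreducible data, which your argument would also have to rule out.) So your reduction to simple simply connected factors fails exactly for those $G$ whose central torus does not split off $p$--locally.

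The paper avoids this by arguing at the level of cohomology rings rather than spaces: from the fibration $BG'\to BG\to BS$, with $G'$ the commutator subgroup and $S$ a torus, one checks that $H^*(BS;\F_p)\to H^*(BG;\F_p)$ is injective and that $H^*(BG;\F_p)$ is free over $H^*(BS;\F_p)$ (using that the target is polynomial, generated over the image of the degree $2$ part, and that $\pi_2(BG)\to\pi_2(BS)$ is onto), so the Eilenberg--Moore spectral sequence collapses and yields an algebra isomorphism $H^*(BG;\F_p)\cong H^*(BS;\F_p)\otimes H^*(BG';\F_p)$ with no splitting of the space $BG$. This reduces to $\pi_1(G)$ finite; then $H^3(BG;\F_p)=0$ shows $\pi_1(G)$ has no $p$--torsion, so passing to the universal cover does not change $\F_p$--cohomology, and a simply connected compact Lie group genuinely is a product of simple simply connected ones, after which Borel's tables apply. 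If you wish to keep your root-datum route you must add a separate argument handling the non-split central extensions such as $U(n)$ at $p\mid n$; the fibration argument is the cleaner fix.
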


\begin{proof}
In \cite[Thm.~2.5]{borel61} Borel determined for all simple simply
connected compact Lie groups $G$, the prime numbers~$p$ such that
$H^*(BG;\F_p)$ is a polynomial algebra concentrated in even degrees,
and this is exactly the data in
Table~\ref{lietable}. ($G=\Spin(2n+1)$, $p\geq 3$, is omitted since
$H^*(B\Spin(2n+1);\F_p) \cong H^*(B\Sp(n);\F_p)$ as $\F_p$--algebras.)

Assume now that $G$ is an arbitrary compact connected Lie group such
that $H^*(BG;\F_p)$ is a polynomial $\F_p$--algebra concentrated in
even degrees. There exists a fibration $BG' \to BG \to BS$, where $G'$
denotes the commutator
subgroup of $G$ and $S$ is a torus. Since $\pi_2(BG) \to \pi_2(BS)$ is
surjective by the long exact sequence in homotopy, $H^2(BS;\F_p) \to
H^2(BG;\F_p)$ is injective. Because $H^*(BS;\F_p)$ is generated by
elements in degree $2$ and $H^*(BG;\F_p)$ is polynomial, the
homomorphism $H^*(BS;\F_p) \to H^*(BG;\F_p)$ is injective and
$H^*(BG;\F_p)$ is free over $H^*(BS;\F_p)$. Hence the Eilenberg--Moore
spectral sequence for the fibration collapses and we get an
isomorphism $H^*(BG;\F_p) \cong H^*(BS;\F_p) \otimes H^*(BG';\F_p)$ of
$\F_p$--algebras. This shows that $H^*(BG';\F_p)$ is a polynomial
algebra concentrated in even degrees as well, so we can assume that
$G$ has finite fundamental group. But, since $H^3(BG;\F_p) = 0$,
$\pi_1(G)$ cannot contain $p$--torsion, so we can assume that $G$ is simply
connected. Hence $G$ splits as a product of simple simply
connected compact Lie groups, and in particular the type of
$H^*(BG;\F_p)$ is a union of multisets of degrees from Table~\ref{lietable}.
\end{proof}

The next proposition covers the exotic case.

\begin{prop} \label{exprop}
A finitely generated graded polynomial $\F_p$--algebra $A$ concentrated in even
degrees is isomorphic to $H^*(BY;\F_p)$ for $Y$ a product of exotic
$p$--compact groups if and only if the type of $A$ is union of
multisets of degrees occurring in Tables~\ref{extable1} and
\ref{extable2}.
\end{prop}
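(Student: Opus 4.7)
The plan is to translate the proposition into a statement about exotic $\Q_p$--reflection groups via the recollection from \cite[Thm.~12.1]{AGMV08} made in the introduction, and then appeal to the Shephard--Todd--Clark--Ewing classification \cite{ST54,CE74,DMW92} together with the existence results for exotic $p$--compact groups from the literature. Recall that an exotic $p$--compact group $X$ is connected with $H^*(X;\Z_p)$ torsion free, and therefore $H^*(BX;\F_p)$ is polynomial in even degrees with type equal to the degrees of $W_X$; and that, by definition, $W_X$ is a finite irreducible exotic $\Q_p$--reflection group.

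For the ``only if'' direction, write $Y = X_1 \times \cdots \times X_k$ as a product of exotic $p$--compact groups. By the K\"{u}nneth formula, the type of $H^*(BY;\F_p)$ is the multiset union of the degrees of the $W_{X_j}$. By the Clark--Ewing classification, each $W_{X_j}$ is either a member of one of the infinite families 2a, 2b, 3 listed in Table~\ref{extable1}, or one of the sporadic exotic groups $G_i$. The $G_i$ appearing in Table~\ref{extable2} are handled directly; the ``redundant'' sporadic groups $G_i$ with $i\in\{4,5,6,7,10,11,13,15,18,19,25,26,27\}$ are omitted from the tables because, as noted in the introduction, their degrees are a multiset union of degrees from the tables at any prime where they occur. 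The only further exotic $\Q_p$--reflection group omitted is $G_{24}$ at $p=2$, which corresponds to the $2$--compact group $\DI(4)$; since $H^*(B\DI(4);\F_2)$ has odd degree generators, it cannot appear as a factor under our even-degree hypothesis. The type of $H^*(BY;\F_p)$ is therefore a union of multisets from Tables~\ref{extable1}--\ref{extable2}.

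For the ``if'' direction, given a type that is a union of multisets from the tables, realize each summand by the exotic $p$--compact group whose Weyl group is the corresponding $\Q_p$--reflection group. Existence of these exotic $p$--compact groups is classical and has been established (at the appropriate primes $p$ determined by the ``Occur for'' columns) by Sullivan, Clark--Ewing, Quillen, Zabrodsky, Aguad\'e, Notbohm, and Dwyer--Wilkerson; see \cite{CE74,notbohm99,AGMV08} and the references therein. The product $Y$ of these groups satisfies $H^*(BY;\F_p) \cong \bigotimes_j H^*(BX_j;\F_p)$, which is a polynomial $\F_p$--algebra concentrated in even degrees of the prescribed type.

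The main obstacle is essentially bookkeeping: identifying the exotic $\Q_p$--reflection groups via the Clark--Ewing list, verifying the multiset-union claim for the thirteen omitted $G_i$, and marshalling the existence theorems for the sporadic exotic $p$--compact groups. No new machinery beyond the recollection of \cite[Thm.~12.1]{AGMV08}, the Shephard--Todd--Clark--Ewing classification, and the existing literature on exotic $p$--compact groups is required.
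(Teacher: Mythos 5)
Your overall strategy---translate the statement into one about the degrees of exotic $\Q_p$--reflection groups via \cite[Thm.~12.1]{AGMV08}, quote the existence literature for the ``if'' direction, and check the multiset-union claim for the thirteen omitted $G_i$---is the same as the paper's. However, there is a genuine gap in your treatment of $p=2$. You dispose of $G_{24}$ by saying it ``corresponds to the $2$--compact group $\DI(4)$,'' whose classifying space has an odd-degree generator. The implicit claim that the \emph{only} exotic $2$--compact group with Weyl group $G_{24}$ is $\DI(4)$ is the uniqueness half of the classification of $2$--compact groups, which this paper is explicitly at pains to avoid (if one grants that classification, the whole theorem already follows from \cite[Thm.~1.4]{AG08classification}). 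What the classification of $\Z_p$--root data \cite[Thm.~8.1]{AG08classification} gives you for free is only that any exotic $2$--compact group $X$ with even-degree polynomial $H^*(BX;\F_2)$ has root datum $\D_{\DI(4)}$; you must still rule out a hypothetical such $X$ with $H^*(BX;\F_2)\cong\F_2[x_8,x_{12},x_{28}]$ (the degrees of $G_{24}$). The paper does this using Dwyer--Wilkerson's formula \cite[Thm.~7.6]{dw:center} for the component group of a centralizer, which depends only on the root datum: it shows $\cC_X({}_2\dT)$ is disconnected, contradicting Lemma~\ref{conn}. (Alternatively, $\F_2[x_8,x_{12},x_{28}]$ cannot be an unstable algebra over $\mathcal{A}_2$, since the quotient $\F_2[x_{28}]$ would have to be one and $28$ is not a power of $2$.) Either argument is needed to close your gap.

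Relatedly, your opening recollection that an exotic $p$--compact group is ``connected with $H^*(X;\Z_p)$ torsion free'' is false at $p=2$: $\DI(4)$ is exotic, yet $H^*(B\DI(4);\F_2)$ is the rank-four Dickson algebra $\F_2[x_8,x_{12},x_{14},x_{15}]$, not concentrated in even degrees, so $H^*(\DI(4);\Z_2)$ has torsion---which is precisely why the $p=2$ case cannot be waved away. For $p$ odd the statement is a theorem rather than a definition, and in the form you actually need (the constructed exotic $p$--compact groups have polynomial even-degree $\F_p$--cohomology of the listed types) it is exactly what the cited existence results of Sullivan, Clark--Ewing, Quillen, Zabrodsky, Aguad{\'e} and Notbohm--Oliver provide; with that reading, the remainder of your argument matches the paper's proof.
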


\begin{proof}
The degrees of every exotic $\Q_p$--reflection group, $p$
odd, is realized as the type of a polynomial $\F_p$--cohomology ring;
this is the culmination of the work of a number of people: Sullivan
\cite[p.~166--167]{sullivan05}, Clark--Ewing \cite{CE74}, Quillen
\cite[\S10]{quillen72}, Zabrodsky \cite[4.3]{zabrodsky84}, Aguad{\'e}
\cite{aguade89} and Notbohm--Oliver \cite{notbohm98}. See also
\cite{notbohm99} and \cite[\S7]{AGMV08} for a unified treatment. In
particular the multisets of degrees in Tables~\ref{extable1} and
\ref{extable2} are all realized.

Conversely suppose that $X$ is an exotic $p$--compact group,
with $H^*(BX;\F_p)$ a polynomial $\F_p$--algebra
concentrated in even degrees. As used before, this implies that
$H^*(BX;\Z_p)$ is also polynomial and $H^*(BX;\Z_p) \cong
H^*(BT;\Z_p)^W$. In particular, the type of $H^*(BX;\F_p)$ agrees with
the type of $H^*(BT;\Z_p)^W \otimes \Q$, i.e., with the degrees of
the $\Q_p$--reflection group $W$, which by assumption is exotic.
For $p=2$ the classification of finite $\Q_2$--reflection groups says that
there is only one exotic $\Q_2$--reflection group, namely $G_{24}$. By
the classification of
$\Z_p$--root data \cite[Thm.~8.1]{AG08classification}, there is a
unique $\Z_2$--root datum associated to $G_{24}$, namely $\D_X =
\D_{\DI(4)}$, the root datum of the exotic $2$--compact group $\DI(4)$
constructed by Dwyer--Wilkerson \cite{DW93}. Using the formula
for the component group of a centralizer \cite[Thm.~7.6]{dw:center} (see also
\cite[Prop.~8.4(3)]{AG08classification}) one sees that
$\cC_X({}_2\dT)$ is not connected, where ${}_2\dT$ denotes the maximal
elementary abelian $2$--subgroup of the discrete torus $\dT$. This
contradicts Lemma~\ref{conn}, so the classifying space of a
$2$--compact group with this root datum cannot have polynomial
$\F_2$--cohomology ring concentrated in even degrees, and we must have
$p$ odd. (Alternatively, one can see directly that the polynomial
algebra $\F_2[x_8, x_{12}, x_{28}]$ cannot be an unstable algebra over
the mod $2$ Steenrod algebra $\mathcal{A}_2$: For degree reasons the
ideal generated by $x_8$ and $x_{12}$ is closed under $\mathcal{A}_2$,
so the quotient $\F_2[x_{28}]$ would have to be an unstable algebra
over $\mathcal{A}_2$. However this is not possible since $28$ is not a
power of $2$.)

It only remains to show that for $p$ odd, the degrees of any exotic
$\Q_p$--reflection group which does not occur in Tables~\ref{extable1}
and \ref{extable2}, can be written as a multiset union of degrees of
those which do: The cases $G_4$, $G_5$, $G_6$, $G_7$, $G_{10}$, $G_{11}$,
$G_{13}$, $G_{15}$, $G_{18}$, $G_{19}$, $G_{25}$, $G_{26}$ and
$G_{27}$ are left out of Table~\ref{extable2} since these are covered
by the groups $G(6,3,2)$, $G(6,1,2)$, $C_4\times C_{12}$,
$C_{12}\times C_{12}$, $G(12,1,2)$, $C_{24}\times C_{24}$, $G_8$,
$G(12,1,2)$, $G(30,1,2)$, $C_{60}\times C_{60}$, $G(6,2,3)$,
$G(6,1,3)$ and $C_6 \times G_{20}$ respectively.
\end{proof}

\begin{proof}[{Proof of Theorem~\ref{mainthm}}]
Suppose that we are in the setup of the theorem, with a fixed
commutative Noetherian ring $R$ of finite Krull dimension. If $H^*(Y;R)$ is a
polynomial $R$--algebra, then Proposition~\ref{RtoFp} shows that for
$p\in\cP$, $H^*(Y;\F_p)$ is a polynomial $\F_p$--algebra with the same
type and $Y\pcom$ is the classifying space of a $p$--compact
group. Hence by Propositions~\ref{polyprop}, \ref{lieprop} and
\ref{exprop}, the type of $H^*(Y;R)$ satisfies the restrictions of
Theorem~\ref{mainthm}, as wanted.

Conversely, suppose that we are given a multiset of degrees, which satisfy
the degree restrictions for all $p \in \cP$. Then for each $p\in \cP$
there exists by Propositions~\ref{lieprop} and \ref{exprop} a space
$B_p$ such that $H^*(B_p;\F_p)$ is a polynomial $\F_p$--algebra with
the given type. By Proposition~\ref{glueing} we can construct a space
$Y$, such that $H^*(Y;R)$ is a polynomial $R$--algebra with the given
type.
\end{proof}

\begin{proof}[{Proof of Corollary~\ref{arithprog}}]
By inspection a given multiset of degrees occurs at most finitely many
times in Tables~\ref{lietable}--\ref{extable2}. It follows that a
given finite multiset $L$ can only be decomposed in finitely many ways
as a union of multisets of degrees from Tables~\ref{lietable}--\ref{extable2}
(ignoring for now restrictions on primes~$p$). We next have to
determine the prime numbers $p$ for which each of these decompositions
can occur.

For short, say that a set of integers is {\em listable} if
it can be written as a union of congruence classes modulo some number
$N$. Finite intersections and finite unions of
listable sets are again listable and there is an obvious
algorithm for computing them as listable sets.
Notice that each entry in the tables occur for a prime number
$p$ if and only if $p$ belongs to some listable set. (This follows directly
from the tables, together with the
observation that for any integer $k$ we have $p\geq k$ if and only if
$p$ belongs to a union of congruence classes modulo
$N=\prod_{l<k,\,l\,\,\text{prime}} l$; e.g., $p \geq 5$ iff $p \equiv
\pm 1$ mod $6$.) A given decomposition of $L$ (as a union of multisets
of degrees corresponding to entries in the tables),
can be used at a prime $p$ if and only if each of the
entries occur at $p$, and by the above this is equivalent to $p$
belonging to an explicit listable set depending only on the decomposition.
Since there are only finitely many decompositions of $L$ as above, 
we conclude that the set of primes $p$ for which there exists some
decomposition of $L$ at $p$ is a listable set. The result now follows
from Theorem~\ref{mainthm}.
\end{proof}

\begin{rem}
A different line of argument for Theorem~\ref{mainthm} in the case
$R=\F_2$ is as follows: As before, using product splitting for
$p$--compact groups, one can reduce to the case where $H^*(Y;\F_2)
\cong H^*(BX;\F_2)$ for $X$ a simply connected simple $2$--compact
group, and the type of $H^*(BX;\F_2)$ agrees with the degrees of
$W_X$. We hence have to rule out the degrees of $W(D_n)$ ($n\geq 4$),
$W(E_6)$, $W(E_7)$, $W(E_8)$, $W(F_4)$, $W(G_2)$, and $W(\DI(4))$. All
but the first family can be eliminated directly, even as algebras over
the Steenrod algebra, by \cite[Thm.~1.4]{thomas65}, and the same can
be seen to hold for $W(D_n)$, $n \geq 5$, $n$ odd, using
\cite{ST69}. The case $W(D_n)$, $n \geq 4$, $n$ even, can actually
exist as an algebra over the Steenrod algebra (when $n$ is a power of
$2$) but cannot occur as the cohomology ring of a space
by the following argument: Consider $X' =
\cC_X({}_2\dT)$. By \cite[Thm.~7.6]{dw:center} $W_{X'}$ is the subgroup of elements in
$W$ which act trivially on $L$ modulo $2$, and hence $W_{X'}$ is a
normal $2$--subgroup of $W$ (cf.~\cite[Lem.~11.3]{AGMV08}). By
assumption and Lemma~\ref{conn}, $X'$ is connected so $W_{X'}$ is
generated by reflections. Since all reflections in $W$ are
conjugate and $-1 \in W$ we conclude $W_{X'} = W$. In particular $W$
is a $2$--group, a contradiction.
\end{rem}

\begin{rem}
The spaces in Theorems~\ref{mainZ} and \ref{mainthm} can be realized
as cohomology rings of discrete groups, by a theorem of Kan--Thurston
\cite{KT76}, but the group can rarely be taken to be finite: If
$H^*(BG;R)$ is a finitely generated polynomial algebra, for $G$ a
finite group and $R$ any commutative ring, then all polynomial
generators are in degree $1$. This follows from a result of Benson--Carlson
\cite[Cor.~6.6]{BC94}, but one can also argue as follows: If all
generators are in even degrees, $H^*(BG;\F_p)$ is a polynomial ring with
the same type as $H^*(BG;R)$ for the primes $p$ which are not units
in $R$. Hence $BG\pcom$ is a connected $p$--compact group, and in
particular every element of $p$--power order in $G$ is divisible by $p$
(see \cite[Prop.~5.6]{DW94} and \cite[(4)]{mislin90}). Therefore the
order of $G$ is prime to $p$, and $H^*(BG;R) = R$. If there are
generators in odd degrees, then $R$ has to be an $\F_2$--algebra and a
small modification shows that all generators have to be in degree one
(see the proof of Proposition~\ref{polylemma}).
\end{rem}


\end{document}